\newcommand{\textcyr}[1]{%
 {\fontencoding{OT2}\fontfamily{wncyr}\fontseries{m}\fontshape{n}\selectfont #1}}
\newcommand{\Sha}{{\mbox{\textcyr{Sh}}}}
\def\act#1#2%
\newcommand{\Z}{{\mathbb Z}}
\newcommand{\Q}{{\mathbb Q}}
\newcommand{\F}{{\mathbb F}}
\newcommand{\Kbar}{{\overline{K}}}
\newcommand{\kbar}{{\overline{k}}}
\newcommand{\calO}{{\mathcal O}}
\DeclareMathOperator{\Sel}{Sel}
\DeclareMathOperator{\End}{End}
\DeclareMathOperator{\Hom}{Hom}
\DeclareMathOperator{\rk}{rk}
\DeclareMathOperator{\Tr}{Tr}
\DeclareMathOperator{\Gal}{Gal}
\DeclareMathOperator{\HH}{H}
\DeclareMathOperator{\Aut}{Aut}
\DeclareMathOperator{\Mor}{Mor}
\newtheorem{Theorem}{Theorem}[section]
\newtheorem{Lemma}[Theorem]{Lemma}
\newtheorem{Proposition}[Theorem]{Proposition}
\newtheorem{Corollary}[Theorem]{Corollary}
\newtheorem{Definition}[Theorem]{Definition}
\newtheorem{Remark}[Theorem]{Remark}
\numberwithin{equation}{section}
\begin{document}
\title{Tate-Shafarevich groups of constant elliptic curves and isogeny volcanos}
\author{Brendan Creutz}
\address{School of Mathematics and Statistics, University of Canterbury, Private Bag 4800, Christchurch 8140, New Zealand}
\email{brendan.creutz@canterbury.ac.nz}
\urladdr{http://www.math.canterbury.ac.nz/\~{}b.creutz}

\author{Jos\'e Felipe Voloch}
\address{School of Mathematics and Statistics, University of Canterbury, Private Bag 4800, Christchurch 8140, New Zealand}
\email{felipe.voloch@canterbury.ac.nz}
\urladdr{http://www.math.canterbury.ac.nz/\~{}f.voloch}

\begin{abstract}
We describe the structure of Tate-Shafarevich groups of a constant elliptic curves over function fields by 
exploiting the volcano structure of isogeny graphs of elliptic curves over finite fields.

\end{abstract}

\maketitle

\section{Introduction}

Let $k$ be a finite field and $E/k$ an elliptic curve. Let $F/k$ 
be another curve and set $K = k(F)$ its function field. The base change $E/K$ is usually referred to
as a constant elliptic curve over the global function field $K$. Already since the 60's, from the work of Tate completed
by Milne \cite{Milne}, it has been known that the full Birch and Swinnerton-Dyer conjecture holds for such constant elliptic
curves and, in particular, that the Tate-Shafarevich group $\Sha(E/K)$ is finite. But not much information on
this group beyond what the Birch and Swinnerton-Dyer conjecture gave was obtained at the time. Later, $\Sha(E/K)$ was computed for supersingular $E$ and various fields $K$ by Elkies,
Shioda and others (\cite{Shioda} and references therein) motivated by the fact that the Mordell-Weil group $E(K)$ with its canonical
height pairing gave interesting examples of lattice packings. In those examples, $\Sha(E/K)$ is a $p$-group,
where $p$ is the characteristic of $k$.

Parallel to these developments, motivated by applications to computational number theory and cryptography,
there have been a number of papers studying the $\ell$-isogeny graphs on elliptic curves (\cite{Drew} and references
therein).
These graphs are defined by taking as vertices elliptic curves over a field $k$ and edges representing isogenies
of degree $\ell$ between them. A fundamental insight of Kohel (Theorem \ref{kohel} below) was extended and reformulated
by Fouquet and Morain \cite{FM} to the statement that a conencted component of the $\ell$-isogeny graph for a prime $\ell$
over a finite field $k$ consisting of ordinary curves has a specific structure referred to as $\ell$-volcano graph. We will provide
the precise description below.

The purpose of this paper is to bring these two trends together and use the structure of the isogeny graphs to
obtain explicit description of the Tate-Shafarevich group $\Sha(E/K)$ in many instances and, indeed, go further
and compute all terms of the descent exact sequence for $\ell$-isogenies, when $F$ is also an elliptic curve.

\section{Preliminaries}

\subsection{Isogeny graphs}

\begin{Definition}
A graph with vertex set $V$ is called an $\ell$-volcano graph if its vertex set can be partitioned as 
$V = V_1 \cup V_2  \cdots \cup V_m$, where $m$ is called the height of the volcano, $V_1$ the base and
$V_m$ the crater or top. In addition, the induced graph on $V_m$ is a cycle (the edges on this subgraph are
called horizontal), the degree of all vertices
not on $V_1$ is $\ell + 1$ , the degree of the vertices in $V_1$ is $1$, for each vertex on $V_i, i <m$, there
is a unique edge from it to a vertex in $V_{i+1}$ (these are called upward edges) and for each vertex on $V_i, i > 1$, the other edges go to
vertices in $V_{i-1}$ (these are called downward edges). The vertices in $V_i$ are said to have height (or level) $i$.
\end{Definition}

\begin{Remark}
\label{trees}
Note that, if we remove the edges of $V$ connecting the vertices of $V_m$,
the connected components of the resulting graph are trees with roots at the 
the vertices of $V_m$. We call those the trees of the volcano.
\end{Remark}

We will assume throughout that $\ell$ is prime and,
not the characteristic of $k$ (the case $\ell = p$ will be briefly mentioned in a comment). 
As stated above, a connected component of the 
$\ell$-isogeny graph of ordinary curves over the finite field $k$ has the structure of a $\ell$-volcano (with a minor
modification if the component contains a curve with $j$-invariant $0$ or $1728$).
Moreover, denoting by $h_\ell(E)$ the height (or level) of $E$ in its connected component of
the $\ell$-isogeny graph, viewed as a volcano, we have $h_\ell(E) = v_\ell([\End(E):\Z[\pi]])$. We refer to an isogeny of degree a power of $\ell$ as upward, downward or horizontal if the corresponding path on the $\ell$-isogeny graph is a sequence of edges with the same property.
	

\begin{Theorem}\cite[Propositions 21 and 22]{Kohel}\label{kohel}
Let $E$ and $E'$ be isogenous ordinary elliptic curves over $k$. Then $h_\ell(E) = h_\ell(E')$ if and only if there is an isogeny $E \to E'$ of degree prime to $\ell$.
\end{Theorem}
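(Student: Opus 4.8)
The plan is to translate the statement into one about conductors of endomorphism rings and then invoke the theory of complex multiplication for ordinary elliptic curves over finite fields. Recall that for an ordinary $E/k$ the Frobenius $\pi$ generates an imaginary quadratic field $\Q(\pi)$, that $\End(E)$ is an order $\calO_E$ with $\Z[\pi]\subseteq \calO_E \subseteq \calO_{\max}$ (where $\calO_{\max}$ denotes the maximal order), and that all of $\End(E)$ is already defined over $k$. The characteristic polynomial of $\pi$ is an isogeny invariant, so $\pi$, $\Q(\pi)$, $\calO_{\max}$ and $\Z[\pi]$ are common to every curve in the isogeny class; in particular $N:=[\calO_{\max}:\Z[\pi]]$ is a fixed constant. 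From the tower $\Z[\pi]\subseteq \calO_E\subseteq\calO_{\max}$ and multiplicativity of the index, together with the formula $h_\ell(E)=v_\ell([\End(E):\Z[\pi]])$ recalled above,
\[
 h_\ell(E)=v_\ell\big([\calO_E:\Z[\pi]]\big)=v_\ell(N)-v_\ell\big([\calO_{\max}:\calO_E]\big).
\]
Hence $h_\ell(E)=h_\ell(E')$ if and only if $v_\ell([\calO_{\max}:\calO_E])=v_\ell([\calO_{\max}:\calO_{E'}])$, i.e.\ the $\ell$-parts of the two conductors agree. I would prove the theorem in this reformulated shape.

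For the implication \emph{prime-to-$\ell$ isogeny $\Rightarrow$ equal height}, factor a given isogeny $E\to E'$ of degree prime to $\ell$ into isogenies of prime degree, each prime $q\ne\ell$. The key local input is that a prime-degree-$q$ isogeny $\phi\colon A\to B$ forces $\calO_A$ and $\calO_B$ to be comparable, with $[\calO_A:\calO_B]$ or $[\calO_B:\calO_A]$ equal to $1$ or $q$; consequently the conductors of $\calO_A$ and $\calO_B$ differ only in their $q$-part. Taking $q\ne\ell$ throughout, $v_\ell([\calO_{\max}:\calO_\bullet])$ is unchanged at every step, so it agrees for $E$ and $E'$.

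The reverse implication is the substantial one. Assume $v_\ell([\calO_{\max}:\calO_E])=v_\ell([\calO_{\max}:\calO_{E'}])=a$. First I would use prime-to-$\ell$ isogenies to replace $E$ by a curve $E_1$ with $\End(E_1)=\calO_{E'}$: for each prime $q\ne\ell$ there are $q$-isogenies that change $v_q$ of the conductor by $\pm 1$ while fixing every other $v_{q'}$, and since the $\ell$-parts of the two conductors already match I only ever need to adjust the prime-to-$\ell$ part. The composite $E\to E_1$ then has degree prime to $\ell$ and $\End(E_1)=\calO_{E'}=:\calO$. Now $E_1$ and $E'$ lie in the same isogeny class and share the endomorphism ring $\calO$. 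By the theory of complex multiplication the set of curves in the isogeny class with endomorphism ring exactly $\calO$ is a principal homogeneous space under $\Pic(\calO)$, the class of an invertible ideal $\fa$ acting through a $k$-isogeny of degree $\Norm(\fa)$. Thus $E'\isom (E_1)_{\fa}$ for some $\fa$, and choosing $\fa$ prime to $\ell$ within its ideal class (every class is represented by an integral invertible ideal prime to $\ell$) produces an isogeny $E_1\to E'$ of degree $\Norm(\fa)$ prime to $\ell$. Composing gives the desired prime-to-$\ell$ isogeny $E\to E'$.

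The main obstacle is the local structural fact underlying both directions: that an isogeny of prime degree $q$ can only alter the endomorphism ring by a one-sided index of $1$ or $q$, and that precisely one ascending neighbour exists when $\calO$ is non-maximal at $q$. This is the heart of Kohel's Propositions 21--22; it is proved by studying the kernel of $\phi$ as a module over $\calO_A$ and $\calO_B$ and comparing it with the $q$-torsion, and it is exactly what makes the conductor-adjusting navigation in the reverse direction well defined. The second key input, the simple transitivity of the $\Pic(\calO)$-action on curves with a fixed endomorphism ring, is the Deuring--Waterhouse description of ordinary isogeny classes, which I would cite rather than reprove.
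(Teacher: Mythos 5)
The paper offers no proof of this theorem at all --- it is quoted directly from Kohel's thesis (Propositions 21 and 22) --- so there is no internal argument to compare yours against; judged on its own terms, your reconstruction is correct and follows the standard route. You reduce the statement to conductor bookkeeping via $h_\ell(E)=v_\ell([\End(E):\Z[\pi]])$, obtain the easy direction from the local fact that an isogeny of prime degree $q$ makes the two endomorphism rings comparable with index $1$ or $q$, and obtain the hard direction by first matching endomorphism rings using vertical $q$-isogenies at primes $q\neq\ell$, then invoking the Deuring--Waterhouse theorem that the curves in a fixed ordinary isogeny class with endomorphism ring exactly $\calO$ form a principal homogeneous space under $\Pic(\calO)$, the class of an ideal acting by an isogeny of degree its norm, which may be chosen prime to $\ell$. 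Both inputs are strictly weaker than the theorem itself (the comparability fact is Kohel's Proposition 21; the torsor statement is Waterhouse's), so the argument is not circular, and it supplies an actual proof where the paper has only a citation --- though you should cite the local fact as an independent lemma rather than as ``the heart of Propositions 21--22,'' since the theorem under discussion is itself attributed to those propositions.

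Three points need tightening. First, ``degree prime to $\ell$'' can still be divisible by $p=\Char(k)$, and the index-$1$-or-$q$ fact is a prime-to-$p$ statement; for ordinary curves a degree-$p$ isogeny is Frobenius or Verschiebung up to isomorphism and identifies the endomorphism rings, so $v_\ell$ of the conductor is still unchanged, but this case must be addressed. Second, the factorization into prime-degree isogenies must be carried out over $k$: if $\ker\psi$ contains no $k$-rational subgroup of order $q$, then it contains all of $E[q]$ and one instead splits off multiplication by $q$, which changes nothing; likewise the ascending and descending $q$-isogenies used in your navigation must be (and, by Kohel's theory, are) defined over $k$. Third, the descending part of the navigation is legitimate only because the target order $\End(E')$ contains $\Z[\pi]$, so one never needs to descend below the floor of the $q$-volcano; this is worth making explicit, since it is precisely where the hypothesis that $E$ and $E'$ are isogenous (hence share the same $\Z[\pi]$) enters.
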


\subsection{Descent}
 
Briefly, let us recall the descent mechanism and the definition of the Selmer and Tate-Shafarevich groups. For
details, see \cite[Chapter X]{Silverman}.

Given an isogeny $\phi: E \to E_1$ defined over a field $K$, we have the exact sequence
$$0 \to \ker \phi \to E \to E_1 \to 0$$
which gives, by taking cohomology, the Kummer sequence,

$$0 \to E_1(K)/\phi(E(K)) \to \HH^1(K,\ker \phi) \to \HH^1(K,E)[\phi] \to 0.$$

If $K$ is a global field, the corresponding exact sequences for each completion $K_v$ of $K$ can also be considered.
By comparing the corresponding sequences, one obtains the descent sequence

\begin{equation}
\label{des-seq}
0 \to E_1(K)/\phi(E(K)) \to \Sel^\phi(E_1/K) \to  \Sha(E/K)[\phi] \to 0 
\end{equation}

\noindent
where $\Sel^\phi(E_1/K)$ is the Selmer group consisting of the elements of $\HH^1(K,\ker \phi)$ whose image in
$\prod_v \HH^1(K_v,\ker \phi)$ is also on the image of $\prod_v E_1(K_v)$ (where the direct product is
taken over all places $v$ of $K$) and 

$$\Sha(E/K) := \ker \left(\HH^1(K,E) \to \bigoplus_v \HH^1(K_v,E)\right)$$

\noindent
is the Tate-Shafarevich group.

 \subsection{Constant elliptic curves}
 
 If $E$ is an elliptic curve over a finite field $k$ and $F$ is another curve defined over $k$ with function field $K = k(F)$, then a basic fact, central to our approach, is the identification $E(K) = \Mor(F,E)$. If, in addition $F$ is also an elliptic curve with identity $0_F$ then, 
 evaluating a map $\alpha \in E(K) = \Mor(F,E)$ at $0_F$ gives a retraction of the inclusion $E(k) \subset E(K)$. This gives a a split exact sequence,
 
\begin{equation}
 \label{split}
0 \to E(k) \to E(K) \to \Hom(F,E) \to 0\,. 
\end{equation}

The following lemma is central to our determination of the first term in~\eqref{des-seq}.

 \begin{Lemma}\label{lem1}
	Suppose $\phi:E \to E_1$ is a an isogeny of $\ell$-primary degree. If $\phi$ is downward, then $\Hom(E,E_1) = \phi \End(E)$. If $\phi$ has prime degree, then $\Hom(E,E_1) = \phi \End(E)$ if and only if $\phi$ is downward or $E \simeq E_1$ and $\phi$ is self dual up to an automorphism.
\end{Lemma}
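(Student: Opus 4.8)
The plan is to convert the equality $\Hom(E,E_1)=\phi\End(E)$ into the computation of a single ideal in the order $\mathcal{O}:=\End(E)$, viewed inside the imaginary quadratic field $L:=\End(E)\otimes\Q$. Set $\deg\phi=\ell^n$ and let $\hat\phi\colon E_1\to E$ be the dual isogeny, so $\hat\phi\circ\phi=[\ell^n]$. First I would observe that precomposition $\psi\mapsto\hat\phi\circ\psi$ defines an injective homomorphism $\Hom(E,E_1)\to\End(E)=\mathcal{O}$ carrying $\phi\End(E)$ onto $\hat\phi\phi\,\mathcal{O}=\ell^n\mathcal{O}$. Since the map is injective and both groups have finite index in $\mathcal{O}$, the index is preserved: $[\Hom(E,E_1):\phi\End(E)]=[J:\ell^n\mathcal{O}]$, where $J:=\hat\phi\,\Hom(E,E_1)\subseteq\mathcal{O}$. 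Thus $\Hom(E,E_1)=\phi\End(E)$ if and only if $J=\ell^n\mathcal{O}$, and the whole lemma reduces to identifying the ideal $J$.

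The main structural input I would use is that, $E$ being ordinary, $T_\ell E$ is free of rank one over $\mathcal{O}\otimes\Z_\ell$: by Tate's theorem the multiplier ring of $T_\ell E$ in $L\otimes\Q_\ell$ is exactly $\mathcal{O}\otimes\Z_\ell$, and since quadratic orders are Gorenstein a proper ideal of the local ring $\mathcal{O}\otimes\Z_\ell$ is invertible, hence free. Reducing modulo $\ell^n$ gives an isomorphism of $\mathcal{O}$-modules $E[\ell^n]\cong\mathcal{O}/\ell^n\mathcal{O}$ on which $\End(E)$ acts by multiplication. With this identification I would check that $\eta\in J$ exactly when $\tfrac1{\ell^n}\phi\circ\eta$ is a genuine isogeny, i.e. exactly when $\eta$ maps $E[\ell^n]$ into $\ker\phi$. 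Writing $\tilde C\subseteq\mathcal{O}$ for the preimage of $\ker\phi$ under $\mathcal{O}\twoheadrightarrow\mathcal{O}/\ell^n\mathcal{O}\cong E[\ell^n]$, this exhibits $J=\{\eta\in\mathcal{O}:\eta\mathcal{O}\subseteq\tilde C\}$ as the largest $\mathcal{O}$-ideal contained in $\tilde C$. Hence $J=\ell^n\mathcal{O}$, i.e. $\Hom(E,E_1)=\phi\End(E)$, precisely when $\ker\phi$ contains no nonzero $\mathcal{O}$-submodule.

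It remains to match this algebraic condition with the geometry of the volcano. The basic dictionary is that an $\ell$-isogeny has $\mathcal{O}$-stable kernel if and only if the endomorphism ring of the target contains $\mathcal{O}$, i.e. if and only if it is horizontal or upward (a stable kernel lets every $\alpha\in\mathcal{O}$ descend to the quotient); this is where I would invoke the description $h_\ell(E)=v_\ell([\End(E):\Z[\pi]])$ and Theorem \ref{kohel}. For the first assertion, a downward isogeny of $\ell$-primary degree has cyclic kernel (Remark \ref{trees}), so a nonzero $\mathcal{O}$-submodule $M$ of $\ker\phi$ would meet $E[\ell]$ in a nonzero $\mathcal{O}$-stable line $M_0$; the first step $E\to E/M_0$ of $\phi$ would then be horizontal or upward, contradicting downwardness. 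Hence $\ker\phi$ has no nonzero $\mathcal{O}$-submodule and $J=\ell^n\mathcal{O}$. For the prime-degree statement a cyclic order-$\ell$ kernel contains a nonzero $\mathcal{O}$-submodule if and only if it is itself $\mathcal{O}$-stable, so equality holds exactly when $\ker\phi$ is not $\mathcal{O}$-stable; I would read this off the local structure of $\mathcal{O}/\ell\mathcal{O}$ (the dual numbers $\F_\ell[\epsilon]$ when $\mathcal{O}$ is non-maximal at $\ell$, and $\F_\ell\times\F_\ell$, $\F_{\ell^2}$, or $\F_\ell[\epsilon]$ at the crater according as $\ell$ splits, is inert, or ramifies).

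The step I expect to be the main obstacle is the borderline case $E\simeq E_1$, which is precisely the configuration forcing $\ker\phi$ to be $\mathcal{O}$-stable (equal endomorphism rings), so that the generic analysis returns an index-$\ell$ discrepancy rather than equality. Here I would abandon $J$ and instead use that $\Hom(E,E_1)$ is a free rank-one $\End(E)$-module: fixing an isomorphism $\iota\colon E_1\to E$ identifies $\phi$ with an endomorphism $\alpha=\iota\circ\phi$ of norm $\ell$ and $\phi\End(E)$ with the principal ideal $\alpha\mathcal{O}$, while $\hat\phi$ corresponds to the conjugate $\bar\alpha$. Self-duality of $\phi$ up to an automorphism is exactly the condition $\bar\alpha=u\alpha$ with $u\in\mathcal{O}^\times$, which governs the comparison of $\alpha\mathcal{O}$ with $\bar\alpha\mathcal{O}$ and so singles out the exceptional case in the statement; carrying out this comparison carefully, and correctly accounting for the extra units when $j(E)\in\{0,1728\}$, is the delicate point. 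Everything else is the bookkeeping already packaged in the reduction to $J$.
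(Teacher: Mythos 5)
Your reduction of the equality $\Hom(E,E_1)=\phi\End(E)$ to the computation of the ideal $J=\hat\phi\,\Hom(E,E_1)\subseteq\mathcal{O}$, via freeness of $T_\ell E$ over $\End(E)\otimes\Z_\ell$ and the identification $E[\ell^n]\cong\mathcal{O}/\ell^n\mathcal{O}$, is correct, and it is a genuinely different route from the paper's: the paper factors an arbitrary $\alpha\in\Hom(E,E_1)$ into a prime-to-$\ell$ part and an $\ell$-primary part, invokes Theorem \ref{kohel} and the tree structure of Remark \ref{trees} for the downward case, then treats the upward case by a dualization trick and the horizontal case by exhibiting an explicit witness $\phi_{\mathfrak{q}^{m-1}}$ outside $\phi\End(E)$. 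Your criterion (equality holds iff $\ker\phi$ contains no nonzero $\mathcal{O}$-submodule) is more systematic, computes the index exactly rather than just detecting non-equality, and your downward argument is sound modulo the standard volcano facts you cite (cyclicity of downward kernels; stable kernel iff not downward).

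However, there is a genuine gap, and it sits exactly where you put it: the case ``$E\simeq E_1$ and $\phi$ self-dual up to an automorphism'' is not proven; you propose to ``abandon $J$'' and compare $\alpha\mathcal{O}$ with $\bar\alpha\mathcal{O}$, deferring the details. That gap cannot be closed, because this clause of the Lemma is false, and your own machinery already shows it: if $E\simeq E_1$ then $\ker\phi$ is automatically $\mathcal{O}$-stable (it is the kernel of the endomorphism $\iota\circ\phi$ and $\mathcal{O}$ is commutative), so $J$ is the prime ideal above $\ell$ cutting out $\ker\phi$ and $[J:\ell\mathcal{O}]=\ell$, never $1$. Even more simply: $\Hom(E,E_1)$ then contains an isomorphism, of degree $1$, while every element of $\phi\End(E)$ has degree divisible by $\ell$, so equality is impossible whenever $E\simeq E_1$. (Concretely, take $\End(E)=\Z[i]$ and $\phi$ with kernel $E[(1+i)]$: then $\hat\phi=\phi\circ(-i)$ is self-dual up to an automorphism, yet $[\Hom(E,E_1):\phi\End(E)]=2$.) The correct statement for prime degree is simply: $\Hom(E,E_1)=\phi\End(E)$ iff $\phi$ is downward. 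The paper's own treatment of this case is flawed---it tests only whether $\hat\phi\in\phi\End(E)$, which does not imply that all of $\Hom(E,E_1)$ lies in $\phi\End(E)$---and the rest of the paper (Theorem \ref{sha-phi} with $F=E$, and the downward case used in Theorem \ref{thm:sha}) relies only on the corrected statement, which your method does prove. So instead of trying to rescue the exceptional clause by a separate unit-bookkeeping argument (a dead end), you should let your $J$-computation stand in the horizontal case and conclude that the exceptional clause is vacuous, i.e.\ flag it as an error in the statement.
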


\begin{proof}
	First suppose $\phi$ is downward and $\alpha \in \Hom(E,E_1)$. We can factor $\alpha$ as $\alpha = \alpha_\ell\circ \alpha'$ with $\alpha' : E \to E'$ an isogeny of degree prime to $\ell$ and $\alpha_\ell:E' \to E_1$ an isogeny of $\ell$-primary degree. By Theorem~\ref{kohel}, $h_\ell(E) = h_\ell(E')$ which is greater than $h_\ell(E_1)$ by assumption. From the structure of the $\ell$-sogeny graph (see Remark~\ref{trees}) the final edge in any path from $E'$ to $E_1$ in the $\ell$-isogeny graph must be $\phi$. Therefore $\alpha_\ell \in \phi\Hom(E',E_1)$ and so $\alpha \in \phi \End(E)$.
	
	For the remainder of the proof we suppose $\phi$ has degree $\ell$. If $\phi$ is not downward, then it is either upward or horizontal. We consider each case in turn.
	
	First suppose $\phi$ is upward and let $L = \End(E_1) \otimes \Q$. Then $\phi$ determines an injective ring homomorphism $\iota: \End(E) \to L$ by $\gamma \mapsto \phi\gamma\hat\phi \otimes \ell^{-1}$. This induces an inclusion $\End(E) \subset \End(E_1) \subset L$ with $[\End(E_1):\End(E)] = \ell$ since $\phi$ is upward. Applying the case already proven to the downward isogeny $\hat\phi$ we get $\Hom(E_1,E) = \hat\phi \End(E_1)$. Dualizing gives $\Hom(E,E_1) = \End(E_1)\phi$. If we also had $\Hom(E,E_1) = \phi \End(E)$, then multiplying by $\hat\phi$ on the right would give $\phi\End(E)\hat{\phi} = \End(E_1)\ell$. But, this implies $\iota(\End(E)) = \End(E_1)$, which is a contradiction.
	
	Now consider the case when $\phi$ is horizontal. This can only happen when $\End(E) = \calO_L$ is the maximal order in $L = \End(E) \otimes \Q$ and $\ell = \frak{p}\frak{q}$ is not inert. Moreover, $\phi = \phi_\frak{p}$ or $\phi= \phi_\frak{q}$, where for an ideal $\frak{a} \subset \End(E)$, the notation here means that $\ker(\phi_\frak{a}) = \{ P \in E \;:\; \alpha(P) = 0 \text{, for all $\alpha \in \frak{a}$}\}$. Relabeling if necessary we can suppose $\phi = \phi_\frak{p}$. If $\frak{p}$ is principal, then $E_1 \simeq E$ and $\hat\phi \in \phi \End(E)$ is equivalent to $\hat{\phi} \in \phi\Aut(E)$. Now suppose these ideals are not principal. Then every endomorphism of $E$ of degree divisible by $\ell$ is itself divisible by $[\ell]$. Let $m$ be the order of $\frak{p}$ in the class group of $\mathcal{O}_L$. Then $\phi_{\frak{q}^{m-1}} \in \Hom(E,E_1)$ corresponds to the path along the crater rim opposite from $\phi = \phi_\frak{p}$. We claim that $\phi_{\frak{q}^{m-1}} \notin \phi\End(E)$. Indeed, if $\phi_{\frak{q}^{m-1}} = \phi_\frak{p}\gamma$ with $\gamma \in \End(E)$, then being an endomorphism of $\ell$-primary degree $\gamma = [\ell^r]$ for some $r$. But this implies $\frak{q}^m = \frak{q}^r\frak{p}^{r+1}$, which contradicts unique factorization of ideals in $\calO_L$.
\end{proof}

\section{Calculation of the Selmer group}

	Suppose $\phi : E \to E_1$ is a separable isogeny of degree $n$ over the finite field $k$ with $n$ relatively prime to the characteristic of $k$. Let $F/k$ be an elliptic curve and set $K = k(F)$. Set $G_k = \Gal(\kbar/k)$.
	
	\begin{Proposition}\label{prop:selunramified}
		The $\phi$-Selmer group, $\Sel^\phi(E'/K)$, is the unramified subgroup, $$\ker\left(\HH^1(K,E[\phi]) \to \prod_v\HH^1(\kbar K_v,E[\phi])\right)\,.$$
	\end{Proposition}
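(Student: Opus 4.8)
The plan is to reduce the global statement to a purely local equality at each place $v$ of $K$, and then to establish that equality by a reduction-and-counting argument. Both groups in question are cut out by imposing, at every place $v$, a condition on the localization of a class $c \in \HH^1(K,E[\phi])$ to $\HH^1(K_v,E[\phi])$: for the Selmer group the condition is that $\res_v(c)$ lie in the image of the local Kummer map $\delta_v\colon E_1(K_v)/\phi(E(K_v)) \hookrightarrow \HH^1(K_v,E[\phi])$, while for the unramified subgroup it is that $\res_v(c)$ lie in $\HH^1_{\mathrm{ur}}(K_v,E[\phi]) := \ker\big(\HH^1(K_v,E[\phi]) \to \HH^1(\kbar K_v,E[\phi])\big)$. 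Since two such local-condition subgroups of $\HH^1(K,E[\phi])$ coincide once the prescribed local subgroups coincide place by place, it suffices to prove, for every $v$, that $\im(\delta_v) = \HH^1_{\mathrm{ur}}(K_v,E[\phi])$. Here I would first record that $\kbar K_v$ is the maximal unramified extension $K_v^{\mathrm{ur}}$: the residue field $\kappa_v$ at $v$ is a finite extension of $k$, so $\kbar = \overline{\kappa_v}$ and the unramified extensions of this geometric local field are exactly its constant-field extensions.

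For the inclusion $\im(\delta_v) \subseteq \HH^1_{\mathrm{ur}}(K_v,E[\phi])$ I would use that a constant curve has good reduction at every $v$ and that $E[\phi]$ is finite étale (since $\deg\phi = n$ is prime to $p = \Char k$), hence unramified, i.e.\ the inertia group at $v$ acts trivially on $E[\phi](\kbar)$. Given $P \in E_1(K_v)$, smoothness of $\phi$ over $\calO_v$ — equivalently Hensel's lemma applied to the étale kernel — produces a $\phi$-preimage of $P$ already over $K_v^{\mathrm{ur}} = \kbar K_v$, so the class $\delta_v(P)$ dies there and is therefore unramified.

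The reverse inclusion I would obtain from a count showing the two local subgroups have the same order. On the unramified side, inflation–restriction together with the unramifiedness of $E[\phi]$ identifies $\HH^1_{\mathrm{ur}}(K_v,E[\phi])$ with $\HH^1(\widehat\Z, E[\phi](\kbar))$, where $\widehat\Z = \Gal(\kbar/\kappa_v)$; for a procyclic group acting on a finite module this $\HH^1$ is the group of Frobenius-coinvariants, of the same order as the invariants, so $\#\HH^1_{\mathrm{ur}}(K_v,E[\phi]) = \#E[\phi](\kappa_v)$. On the Kummer side, $\delta_v$ is injective, so $\#\im(\delta_v) = \#\big(E_1(K_v)/\phi(E(K_v))\big)$. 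Using the good-reduction filtration $0 \to \widehat E(\fm_v) \to E(K_v) \to E(\kappa_v) \to 0$ and its analogue for $E_1$, and noting that $\phi$ restricts to an isomorphism $\widehat E \to \widehat{E_1}$ of formal groups (its kernel is étale, hence meets $\widehat E$ trivially and is of degree prime to $p$), the snake lemma identifies both $\ker$ and $\coker$ of $\phi$ on $K_v$-points with those of the reduced isogeny $\bar\phi\colon E(\kappa_v) \to E_1(\kappa_v)$. Since isogenous elliptic curves over the finite field $\kappa_v$ have equally many rational points, $\#E(\kappa_v) = \#E_1(\kappa_v)$, whence $\#\ker\bar\phi = \#\coker\bar\phi$ and so $\#\big(E_1(K_v)/\phi(E(K_v))\big) = \#E(K_v)[\phi] = \#E[\phi](\kappa_v)$. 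The two orders agree, and with the inclusion already proven this gives the local equality.

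The forward inclusion is soft; the main obstacle is the order computation, and within it the clean identification $\#\im(\delta_v) = \#E[\phi](\kappa_v)$. This rests on two facts I would state carefully: that $\phi$ is an isomorphism on formal groups (so reduction is an isomorphism on both $\ker$ and $\coker$ of $\phi$), and that isogenous elliptic curves over a finite field share their point count. Because $E$ is constant there are no bad-reduction places to handle separately and no archimedean places to consider, so these local computations are uniform in $v$; the extra automorphisms at $j = 0, 1728$ do not affect the relevant orders.
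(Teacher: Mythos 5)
Your proof is correct, but its local step runs along a genuinely different route from the paper's. Both arguments reduce to the same local assertion: at each place $v$, the image of the local Kummer map $\delta_v$ equals the kernel of $\HH^1(K_v,E[\phi]) \to \HH^1(\kbar K_v,E[\phi])$. The paper proves this in one stroke with a commutative diagram: writing $k_v$ for the residue field, the map $E_1(k_v)/\phi(E(k_v)) \to E_1(K_v)/\phi(E(K_v))$ induced by the inclusion of constant points is an isomorphism (the kernel of reduction is divisible by $\deg\phi$, which is prime to $p$ --- the same formal-group fact you invoke), and the Kummer map $E_1(k_v)/\phi(E(k_v)) \to \HH^1(k_v,E[\phi])$ over the finite residue field is an isomorphism because $\HH^1(k_v,E)=0$ by Lang's theorem; since the unramified subgroup is precisely the image of inflation from $\HH^1(k_v,E[\phi])$, commutativity of the diagram gives the equality with no separate inclusion step and no counting. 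You instead run the classical good-reduction argument: one inclusion by lifting $\phi$-preimages over the maximal unramified extension (Hensel's lemma, using that $\phi$ is \'etale over $\calO_v$), then equality of orders, computing both sides to be $\#E[\phi](\kappa_v)$ --- Frobenius coinvariants versus invariants on the unramified side; the formal-group isomorphism, the snake lemma, and isogeny-invariance of point counts over finite fields on the Kummer side. The trade-off: the paper's diagram is shorter, exploits constancy directly, and its two isomorphisms are recycled immediately in the theorem that follows (where $\partial_\phi : E'(k)/\phi(E(k)) \simeq \HH^1(k,E[\phi])$ reappears); your argument costs more bookkeeping but is more portable, since it applies verbatim at any good-reduction place of a not-necessarily-constant elliptic curve whenever $\deg\phi$ is prime to the residue characteristic, which is exactly why it is the standard argument over number fields.
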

	
	\begin{proof}
		It is enough to show that for any place $v$ of $K$ the image of $E_1(K_v)/\phi (E(K_v))$ in $\HH^1(K_v,E[\phi])$ under the map in~\eqref{des-seq} coincides with the kernel of the restriction map $\HH^1(K_v,E[\phi]) \to \HH^1(\kbar K_v,E[\phi])$. Suppose $k_v$ is the residue field of $K_v$. 	The inflation-restriction sequence yields a commutative diagram with exact row
		\[
			\xymatrix{
			& E_1(k_v)/\phi(E(k_v)) \ar[r] \ar[d] & E_1(K_v)/\phi(E(K_v)) \ar[d]\\
			0 \ar[r]& \HH^1(k_v,E[\phi]) \ar[r] & \HH^1(K_v,E[\phi]) \ar[r]& \HH^1(\kbar K_v,E[\phi]) \,.}
		\]
		We claim that the arrows eminating from the top left are both isomorphisms, from which the result easily follows.

		The top horizontal map is induced by the inclusion $E_1(k_v) \subset E_1(K_v)$. It is an isomorphism since the kernel of the reduction map $E_1(K_v) \to E_1(k_v)$ is $\ell$-divisible for every $\ell$ not divisible by $p = \operatorname{char}(k)$ \cite[Propositions IV.2.3 and VI.2.2]{Silverman} and $\deg(\phi)$ is prime to $p$. That the vertical arrow is an isomorphism follows from exactness of~\eqref{des-seq} and the fact that $\HH^1(k,E) = 0$ since every smooth genus $1$ curve over $k_v$ has a rational point.
	\end{proof}
	
	Define $r_\phi : \Hom(F,E') \to \Hom(F[n],E[\phi])$ by $r_\phi(\alpha) = \left.\phi^*\alpha\right|_{F[n]}$, where the notation means the restriction of the isogeny $\phi^*\alpha$ to $F[n]$.
	
	\begin{Lemma}
		$r_\phi(\alpha) = (\left.\alpha^*\right|_{E'[\phi^*]})^D$, the Cartier dual of the morphism of Galois modules obtained by restricting $\alpha^*$ to kernel of $\phi^*$.
	\end{Lemma}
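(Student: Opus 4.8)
The plan is to reduce the claimed equality of homomorphisms $F[n]\to E[\phi]$ to a single identity of Weil pairings, and then to verify that identity by interposing the Weil pairing on $E'[n]$ and invoking the adjunction (functoriality) of the Weil pairing under isogenies. Throughout write $\phi^*:E'\to E$ and $\alpha^*:E'\to F$ for the dual isogenies, so that $r_\phi(\alpha)(P)=\phi^*(\alpha(P))$ for $P\in F[n]$. First I would check that everything lands where it should and is $G_k$-equivariant: since $\alpha$ is a homomorphism one has $\alpha(P)\in E'[n]$, and then $\phi(\phi^*(\alpha(P)))=[n]\alpha(P)=0$, so $\phi^*(\alpha(P))\in E[\phi]$ and $r_\phi(\alpha)$ is a Galois-equivariant homomorphism $F[n]\to E[\phi]$. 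Likewise, for $Q\in E'[\phi^*]=\ker\phi^*$ one has $[n]Q=\phi(\phi^*Q)=0$, whence $\alpha^*(Q)\in F[n]$ and $\alpha^*|_{E'[\phi^*]}$ is a Galois-equivariant homomorphism $\ker\phi^*\to F[n]$.

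Next I would fix the duality conventions. Let $e_\phi:E[\phi]\times E'[\phi^*]\to\mu_n$ be the perfect Weil pairing realizing $E'[\phi^*]$ as the Cartier dual of $E[\phi]$, and let $e^F_n:F[n]\times F[n]\to\mu_n$ be the Weil pairing, realizing $F[n]$ as its own Cartier dual; both are Galois-equivariant. Under these identifications, for any homomorphism $f:\ker\phi^*\to F[n]$ the Cartier dual $f^D$ is the unique homomorphism $F[n]\to E[\phi]$ characterized by $e_\phi\bigl(f^D(P),Q\bigr)=e^F_n\bigl(P,f(Q)\bigr)$ for all $P\in F[n]$ and $Q\in\ker\phi^*$. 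Taking $f=\alpha^*|_{E'[\phi^*]}$, the assertion $r_\phi(\alpha)=(\alpha^*|_{E'[\phi^*]})^D$ becomes the single pairing identity
\[
e_\phi\bigl(\phi^*(\alpha(P)),\,Q\bigr)=e^F_n\bigl(P,\,\alpha^*(Q)\bigr),\qquad P\in F[n],\ Q\in\ker\phi^*.
\]

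To prove this I would interpose the Weil pairing $e^{E'}_n$ on $E'[n]$ and split the computation into two steps. The first is that $e_\phi$ is induced from $e^{E'}_n$ through the dual isogeny, namely $e_\phi(\phi^*W,Q)=e^{E'}_n(W,Q)$ for all $W\in E'[n]$ and $Q\in\ker\phi^*$; this is precisely the statement that the ambient pairing on $E'$ cuts out the Cartier duality between $E[\phi]$ and $\ker\phi^*$, and it is well posed because $\ker\phi^*$, being the kernel of an isogeny of degree $n$, is isotropic for $e^{E'}_n$. The second is the adjunction property of the Weil pairing applied to $\alpha$, giving $e^{E'}_n(\alpha(P),Q)=e^F_n(P,\alpha^*(Q))$ for $P\in F[n]$ and $Q\in E'[n]$. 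Setting $W=\alpha(P)$ and chaining these two equalities yields the displayed identity, and hence the Lemma.

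The routine ingredients are the adjunction property (a standard feature of the Weil pairing) and the bookkeeping that each morphism has the asserted source and target. The main obstacle is the first compatibility together with the normalisation of the various dualities: one must pin down $e_\phi$ and the isomorphisms $E'[\phi^*]\cong E[\phi]^D$ and $F[n]\cong F[n]^D$ so that the two induced pairings agree on the nose, with the slots and any inverses matching, so that the final relation is a genuine equality rather than its inverse or transpose. The cleanest way to remove this obstacle is to \emph{define} $e_\phi$ through $e^{E'}_n$ and $\phi^*$ and then verify that this agrees with the Cartier pairing implicit in the formation of $(\cdot)^D$; once a single consistent choice is fixed, the two-step computation above closes the argument.
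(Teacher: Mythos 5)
Your proposal is correct and follows essentially the same route as the paper: both reduce the claim to a single Weil-pairing identity and establish it by chaining the adjunction property $e_n^{E'}(\alpha(x),y)=e_n^F(x,\alpha^*(y))$ with the compatibility $e_\phi(\phi^*W,y)=e_n^{E'}(W,y)$ for $y\in E'[\phi^*]$, exactly the two equalities in the paper's displayed computation. The only difference is expository: you spell out the well-definedness, equivariance, and duality normalisations that the paper leaves implicit under the phrase ``functoriality of the Weil pairing.''
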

	
	\begin{proof}
		The Weil pairing gives isomorphisms $F[n] \ni x \mapsto e_n(x,\bullet) \in \Hom(F[n],\mu_n) = F[n]^D$ and $E[\phi] \ni x \mapsto e_\phi(x,\bullet) \in E'[\phi^*]^D$. The Cartier dual of $\alpha^*$ restricted to $F[n]$ is the map $F[n]^D \ni e_n(x,\bullet) \mapsto e_n(x,\alpha^*(\bullet))$. For $y \in E'[\phi^*]$ functoriality of the Weil pairing gives that
	\[
			e_n^F(x,\alpha^*(y)) = e_n^{E'}(\alpha(x),y) = e_\phi(\phi^*\alpha(x),y).
	\]
	Therefore $(\left.\alpha^*\right|_{E'[\phi^*]})^D$ is the map sending $x$ to $\phi^*\alpha(x) = r_\phi(\alpha)(x)$.
	\end{proof}
	
	\begin{Lemma}
		$r_\phi(\alpha) = 0$ if and only if $\alpha \in \phi \Hom(F,E)$.
	\end{Lemma}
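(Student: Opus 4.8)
The plan is to prove the two implications separately, reducing everything to the relation $\phi^*\phi = \phi\phi^* = [n]$ (where $\phi^*$ denotes the dual isogeny and $n = \deg\phi$) together with the elementary fact that a homomorphism of elliptic curves with domain $F$ vanishes on $F[n]$ exactly when it is divisible by $[n]$. The forward implication is immediate: if $\alpha = \phi\beta$ with $\beta \in \Hom(F,E)$, then $r_\phi(\alpha) = \phi^*\alpha|_{F[n]} = \phi^*\phi\beta|_{F[n]} = [n]\beta|_{F[n]}$, which vanishes because $\beta$ is a homomorphism and $[n]$ kills $F[n]$.

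For the converse I would argue as follows. Suppose $r_\phi(\alpha) = 0$, i.e. the homomorphism $\psi := \phi^*\alpha : F \to E$ vanishes on $F[n]$. Since $n$ is prime to $\Char(k)$, the map $[n]: F \to F$ is a separable isogeny with kernel exactly $F[n]$, so it realizes $F$ as the quotient $F/F[n]$; by the universal property $\psi$ factors as $\psi = [n]\beta$ for some $\beta \in \Hom(F,E)$. I then claim $\alpha = \phi\beta$. Both $\alpha$ and $\phi\beta$ have the same image under $\phi^*$, namely $\phi^*\alpha = \psi = [n]\beta = \phi^*\phi\beta$, so $\delta := \alpha - \phi\beta$ satisfies $\phi^*\delta = 0$. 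Hence the image of $\delta : F \to E'$ lies in the finite group $\ker\phi^* = E'[\phi^*]$; as any nonzero homomorphism of elliptic curves is a surjective isogeny with infinite image, we get $\delta = 0$ and therefore $\alpha = \phi\beta \in \phi\Hom(F,E)$.

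Alternatively, one can invoke the preceding lemma: since $r_\phi(\alpha) = (\alpha^*|_{E'[\phi^*]})^D$ and Cartier duality is faithful, $r_\phi(\alpha) = 0$ is equivalent to $\alpha^*$ vanishing on $E'[\phi^*] = \ker\phi^*$, which says $\alpha^*$ factors through $\phi^*$; dualizing this factorization then expresses $\alpha$ as $\phi$ composed with an element of $\Hom(F,E)$. I do not expect a genuine obstacle in either route; the only step needing care is the vanishing $\phi^*\delta = 0 \Rightarrow \delta = 0$ (equivalently, that post-composition with $\phi^*$ is injective on $\Hom(F,E')$), which is what prevents the finite kernel of $\phi^*$ from producing spurious factorizations and which makes the divisibility-by-$[n]$ criterion an exact translation of $r_\phi(\alpha) = 0$.
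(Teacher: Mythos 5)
Your proposal is correct and follows essentially the same route as the paper: factor $\phi^*\alpha$ through $[n]$ to produce $\beta$, then observe that $\alpha - \phi\beta$ has image in the finite group $\ker\phi^*$ and hence vanishes, since a nonzero homomorphism of elliptic curves is surjective. The Cartier-duality alternative you sketch is a valid variant, but your main argument is the paper's proof.
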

	
	\begin{proof}
		If $\alpha = \phi\alpha'$, then $r_\phi(\alpha) = \phi^*\phi\alpha' = n\alpha' = 0$ when restricted to $F[n]$. Conversely, suppose $\phi^*\alpha = 0$ on $F[n]$. Then the isogeny $\phi^*\alpha$ must factor through multiplication by $n$. This means there is some $\beta \in \Hom(F,E')$ such that $\phi^*\alpha = n\beta = \phi^*\phi\beta$. Therefore the map $\alpha - \phi \beta$ takes values in $E'[\phi^*]$. But a difference of isogenies is either surjective or the zero map. Hence $\alpha = \phi\beta$.
	\end{proof}
	
	\begin{Remark}\label{rem:phi=l}
	In the case that $\phi = [\ell]$ is multiplication by $\ell$ on $E$, $r_\phi(\alpha) = \ell\alpha \in \Hom(F[\ell^2],E[\ell])$. This can be identified with $\left.\alpha\right|_{F[\ell]} \in \Hom(F[\ell],E[\ell])$ as follows. Every element of the Galois module $\Hom(F[\ell^2],E[\ell])$ is divisible by $\ell$. The map sending $\beta = \ell\beta' \in \Hom(F[\ell^2],E[\ell])$ to $\left.\beta'\right|_{F[\ell]} \in \Hom(F[\ell],E[\ell])$ is an isomorphism sending $r_\ell(\alpha)$ to $\left.\alpha\right|_{F[\ell]}$.
	\end{Remark}

	\begin{Theorem}
		There is an exact and commutative diagram
		\[
			\xymatrix{
			0 \ar[r]& E'(k)/\phi(E(k)) \ar[d] \ar[r]& E'(k)/\phi(E(k)) \ar[d]^{\partial_\phi} \\
			0 \ar[r]& E'(K)/\phi(E(K)) \ar[d] \ar[r]^{\partial_\phi} & \Sel^\phi(E'/K) \ar[r]\ar[d] &  \Sha(E/K)[\phi] \ar[r] \ar@{=}[d] & 0 \\
			0 \ar[r]& \frac{\Hom_{G_k}(F,E')}{\phi\Hom_{G_k}(F,E)} \ar[r]^{r_\phi}& \Hom_{G_k}(F[n],E[{\phi]}) \ar[r] \ar[r] & \Sha(E/K)[\phi] \ar[r] & 0
			}
		\]
	\end{Theorem}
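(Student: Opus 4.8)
The plan is to exhibit the whole array as the snake-lemma comparison of two short exact sequences: the descent sequence~\eqref{des-seq} and a ``geometric'' sequence attached to the constant-field extension $\kbar K/K$, with the splitting~\eqref{split} isolating the constant part. Concretely, I would first produce the middle column as a short exact sequence
\[
0 \to E'(k)/\phi(E(k)) \xrightarrow{\ \partial_\phi\ } \Sel^\phi(E'/K) \to \Hom_{G_k}(F[n],E[\phi]) \to 0,
\]
then produce the leftmost column from~\eqref{split}, take the middle row to be~\eqref{des-seq}, and let a diagram chase force the bottom row. The top row is the identity on $E'(k)/\phi(E(k))$, and the rightmost column is the identity on $\Sha(E/K)[\phi]$, so only the first two columns carry content.

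The main step is the middle column. Starting from Proposition~\ref{prop:selunramified}, I would use that $\kbar K_v$ is the maximal unramified extension of $K_v$, so that $\Sel^\phi(E'/K)$ is the everywhere-unramified subgroup and the defining map factors through restriction to $\HH^1(\kbar K,E[\phi])$. Applying inflation--restriction to $K\subset \kbar K\subset \Kbar$, and using that $k$ is algebraically closed in $K=k(F)$ so that $\Gal(\kbar K/K)=G_k$ and $E[\phi](\kbar K)=E[\phi]$, the vanishing of $\HH^2(G_k,E[\phi])$ (as $G_k=\widehat{\Z}$ has cohomological dimension $1$) yields
\[
0 \to \HH^1(k,E[\phi]) \to \HH^1(K,E[\phi]) \to \HH^1(\kbar K,E[\phi])^{G_k}\to 0.
\]
The kernel consists of inflated constant classes; these restrict to $0$ in every $\HH^1(\kbar K_v,E[\phi])$, hence lie in $\Sel^\phi$, and by the Kummer sequence over $k$ together with $\HH^1(k,E)=0$ (Lang) it equals $E'(k)/\phi(E(k))$. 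A class lies in $\Sel^\phi$ exactly when its restriction to $\HH^1(\kbar K,E[\phi])$ is unramified at every place, so any preimage of an everywhere-unramified geometric class is automatically Selmer; thus restriction carries $\Sel^\phi$ onto $\HH^1_{\mathrm{ur}}(\kbar K,E[\phi])^{G_k}$. Finally, over $\kbar$ the everywhere-unramified classes are $\HH^1_{\mathrm{et}}(F_{\kbar},E[\phi])=\Hom(\pi_1(F_{\kbar}),E[\phi])=\Hom(F[n],E[\phi])$, using that $E[\phi]$ is constant of order prime to $p$ and that the prime-to-$p$ part of $\pi_1^{\mathrm{ab}}(F_{\kbar})$ is $\widehat{T}(F)$; taking $G_k$-invariants completes the middle column.

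For the remaining data, the leftmost column is the snake-lemma cokernel sequence obtained by applying $\phi$ to~\eqref{split} for $E$ and $E'$ simultaneously, where the splitting kills the connecting map and leaves $0\to E'(k)/\phi(E(k))\to E'(K)/\phi(E(K))\to \Hom_{G_k}(F,E')/\phi\Hom_{G_k}(F,E)\to 0$ exact. Commutativity of the top square is the compatibility of $\partial_\phi$ with $k\hookrightarrow K$, identifying $\partial_\phi(E'(k))$ with the inflation classes. For the lower square I would check that the map induced on horizontal cokernels by $\partial_\phi$ is exactly $r_\phi$: on $\Hom_{G_k}(F,E')$ the connecting map followed by geometric restriction sends $\alpha$ to the $E[\phi]$-valued character of $F[n]$ cut out by $\phi^{*}\alpha$, which is $r_\phi(\alpha)$ by the Weil-pairing (Cartier-dual) computation in the lemmas above, and the lemma computing $\ker r_\phi$ shows this descends to an injection on $\Hom_{G_k}(F,E')/\phi\Hom_{G_k}(F,E)$.

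With both columns short exact, the middle row~\eqref{des-seq} short exact, and the two squares commuting, the snake lemma applied to the map of short exact sequences given by the identity on sub-objects and $\partial_\phi$ on total objects yields injectivity of $r_\phi$ on the quotient and an isomorphism $\coker(\partial_\phi)\cong\coker(r_\phi)$; since $\coker(\partial_\phi)=\Sha(E/K)[\phi]$ by~\eqref{des-seq}, the bottom row is exact and the diagram is complete. I expect the main obstacle to be the middle column, namely pinning down that $\Sel^\phi$ surjects onto the $G_k$-invariant everywhere-unramified geometric classes and that these are canonically $\Hom_{G_k}(F[n],E[\phi])$, together with matching the abstract restriction map to the explicit $r_\phi$ through the Weil pairing; the homological bookkeeping is then routine.
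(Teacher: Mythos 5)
Your overall strategy coincides with the paper's: Proposition~\ref{prop:selunramified}, inflation--restriction for $K \subset \kbar K \subset K^{\mathrm{sep}}$, vanishing of $\HH^2(k,E[\phi])$ because $k$ has cohomological dimension $1$, Lang's theorem $\HH^1(k,E)=0$ to identify the inflated part with $E'(k)/\phi(E(k))$, identification of the $G_k$-equivariant everywhere-unramified geometric classes with $\Hom_{G_k}(F[n],E[\phi])$ via the multiplication-by-$n$ covering of $F_{\kbar}$, the lemma on $\ker r_\phi$ for injectivity on the quotient, and formal diagram chasing to finish. All of this is correct, and your treatment of the middle column (in particular the argument that any preimage of an everywhere-unramified geometric class is automatically a Selmer class, so that restriction is surjective onto $\Hom_{G_k}(F[n],E[\phi])$) is if anything stated more carefully than in the paper.

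The gap is at the one step carrying genuine content: commutativity of the lower square, i.e.\ the claim that the composite $\Hom_{G_k}(F,E') \xrightarrow{\partial_\phi} \Sel^\phi(E'/K) \to \Hom_{G_k}(F[n],E[\phi])$ equals $r_\phi$. You assert that the connecting map followed by geometric restriction sends $\alpha$ to ``the $E[\phi]$-valued character of $F[n]$ cut out by $\phi^*\alpha$'' and cite the Weil-pairing (Cartier duality) lemma as justification. That lemma only reinterprets $r_\phi$ as the Cartier dual of $\left.\alpha^*\right|_{E'[\phi^*]}$; it says nothing about the connecting homomorphism $\partial_\phi$, so it cannot establish the identity you need --- your sentence is precisely the statement to be proved, not a consequence of the lemma. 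What is missing is the computation the paper supplies: after adjusting by $E'(k)$ so that $\alpha : F \to E'$ is an isogeny, form the fibered product $F' = F \times_{E'} E$ with projections $\psi : F' \to F$ and $\beta : F' \to E$, so that $\beta \in E(k(F'))$ satisfies $\phi\beta = \alpha$ and the cocycle representing $\partial_\phi(\alpha)$ is $\sigma \mapsto {}^\sigma\beta - \beta = \beta(P_\sigma)$ with $P_\sigma \in \ker(\psi)$. Since $\psi\psi^* = [n]$ on $F$ and $\phi\,\beta\psi^* = \alpha\,[n] = \phi\,\phi^*\alpha$, one gets $\beta\psi^* = \phi^*\alpha$, and because the identification of unramified homomorphisms with $\Hom(F[n],E[\phi])$ is made through the $[n]$-covering of $F$, the homomorphism attached to $\partial_\phi(\alpha)$ is exactly $\left.\phi^*\alpha\right|_{F[n]} = r_\phi(\alpha)$. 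Without this (or an equivalent monodromy computation of the torsor $\alpha^*E \to F$), the commutativity of the square, and hence the exactness of the bottom row, is unproven.
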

	
	\begin{proof}
		Let $G = \Gal(K^{\operatorname{sep}}/\kbar K)$. Since $E[\phi]^G = E[\phi]$, the inflation-restriction sequence gives
		\[
			0 \to \HH^1(k,E[\phi]) \to \HH^1(K,E[\phi]) \to \Hom_{G_k}(G,E[\phi]) \to \HH^2(k,E[\phi]) = 0\,,
		\]
		where the final term is $0$ because $k$, being a finite field, has cohomological dimension $1$. By the Weil conjectures $\HH^1(k,E) = 0$. So the connecting homomorphism induces an isomorphism $\partial_\phi: E'(k)/\phi(E(k)) \simeq \HH^1(k,E[\phi])$. By Proposition \ref{prop:selunramified} the $\phi$-Selmer group and its image in $\Hom_{G_k}(G,E[\phi])$ are the unramified subgroups. The maximal unramfied extension of $\kbar K$ of exponent $\ell$ corresponds to multiplication by $\ell$ on $\kbar$, so every unramified (continuous) homomorphism $G \to E[\phi]$ factors through $F[\ell]$. This gives an isomorphism $\Hom_{G_k}(G,E[\phi])^\text{unr} \to \Hom_{G_k}(F[\ell],E[\phi])$.

		It remains to show that the square involving $r_\phi$ commutes. For this suppose $(\alpha : F \to E') \in E'(K)$ is a morphism. Adjusting by $E'(k)$ if necessary we may assume $\alpha \in \Hom(F,E')$ is an isogeny. The fibered product $F' = F \times_{E'} E$ along $\alpha$ and $\phi$ corresponds to an extension $L/ K$ for which there is a point $\beta \in E(L)$ such that $\phi\beta = \alpha$. The cocycle $\eta : \Gal(\Kbar/K) \to E[\phi]$ given by $\sigma \mapsto {}^\sigma(\beta) - \beta$ represents $\partial_\phi(\alpha)$. The action of $\sigma$ is translation by some $P_\sigma \in \ker(\psi)$ so that $({}^\sigma \beta)(x) = \beta(x+P_\sigma)$. Then ${}^\sigma\beta - \beta = \beta(P_\sigma)$. We have a commutative diagram
		\[
			\xymatrix{
				F \ar[r]^{\psi^*} \ar[d]^\alpha & F' \ar[r]^\psi \ar[d]^\beta & F \ar[d]^\alpha \\
				E' \ar[r]^{\phi^*} & E \ar[r]^\phi & E'	
			}
		\]
		where the rows are multiplication by $n = \deg(\phi)$. The cocycle $\eta$ factors through $F[n]$ and from the commutativity of the diagram we see that the map $\sigma \mapsto \beta(P_\sigma)$ is given by $r_\phi(\alpha) : F[n] \to E[\phi]$.
	\end{proof}
	
	When $\phi$ is multiplication by $n$ the theorem together with Remark~\ref{rem:phi=l} gives the following. 
	
	\begin{Corollary}\label{cor:lSelmer}
		Suppose $E,F$ are elliptic curves over the finite field $k$ and $n$ is an integer not divisible by the characteristic of $k$. With $K = k(F)$, there is an exact sequence
		\[
			0 \to \frac{\Hom_{G_k}(F,E)}{n\Hom_{G_k}(F,E)} \to \Hom_{G_k}(F[n],E[n]) \to \Sha(E/K)[n] \to 0\,.
		\]
		If $E$ and $F$ are isogenous ordinary elliptic curves, then $\Sha(E/K)[n]$ has rank at most $2$ over $\Z/n$.
	\end{Corollary}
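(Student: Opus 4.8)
\emph{Reduction to a cokernel, and to prime powers.} The first exact sequence is immediate: it is the bottom row of the Theorem applied to $\phi = [n]$ together with the identification of Remark~\ref{rem:phi=l}, which presents $\Sha(E/K)[n]$ as the cokernel of the reduction map
\[
r_n : \frac{\Hom_{G_k}(F,E)}{n\Hom_{G_k}(F,E)} \To \Hom_{G_k}(F[n],E[n]), \qquad \alpha \mapsto \left.\alpha\right|_{F[n]}\,.
\]
Since the minimal number of generators of a finite abelian group is the maximum, over primes $\ell$, of that of its $\ell$-primary part, it suffices to show that for the exact power $\ell^m$ of each $\ell$ dividing $n$ the group $\Sha(E/K)[\ell^m] = \coker(r_{\ell^m})$ is generated by at most two elements.

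\emph{Passing to Tate modules.} The plan is to convert this into linear algebra over $\Z_\ell$ via the Tate module. Fix $\ell \ne p$ and set $H = \Hom_{\Z_\ell}(T_\ell F, T_\ell E)$, a free $\Z_\ell$-module of rank $4$ on which $G_k$ acts by $(\sigma\cdot\psi)(x) = \sigma(\psi(\sigma^{-1}x))$; as $G_k$ is topologically generated by the Frobenius $\pi$, this action is conjugation $\psi \mapsto \pi_E\,\psi\,\pi_F^{-1}$ by the Frobenius endomorphisms. Tate's isogeny theorem identifies $\Hom_{G_k}(F,E)\otimes\Z_\ell$ with $H^{G_k}$, and since $F[\ell^m] = T_\ell F/\ell^m$ and $E[\ell^m] = T_\ell E/\ell^m$ it identifies $\Hom_{G_k}(F[\ell^m],E[\ell^m])$ with $(H/\ell^m)^{G_k}$; under these identifications $r_{\ell^m}$ becomes the natural reduction map $H^{G_k}/\ell^m \to (H/\ell^m)^{G_k}$.

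\emph{Cohomological bookkeeping.} I would then feed the short exact sequence $0 \to H \xrightarrow{\ell^m} H \to H/\ell^m \to 0$ into the long exact cohomology sequence for $G_k$. Because $k$ is finite, $G_k \isom \hat{\Z}$ has cohomological dimension $1$, with $\HH^0(G_k,-) = (-)^{G_k}$ and $\HH^1(G_k,-) = \coker(\pi-1)$ and nothing higher, so the sequence collapses to
\[
0 \To H^{G_k}/\ell^m \To (H/\ell^m)^{G_k} \To \HH^1(G_k,H)[\ell^m] \To 0\,.
\]
This yields $\Sha(E/K)[\ell^m] = \coker(r_{\ell^m}) \isom \coker\bigl(\pi-1 : H \to H\bigr)[\ell^m]$, reducing everything to the torsion of the cokernel of $\pi-1$ acting on $H \isom \Z_\ell^4$.

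\emph{The eigenvalue count (the crux).} The hypotheses enter in bounding this torsion, and I expect this eigenvalue count to be the main obstacle. Since $E$ and $F$ are isogenous, $\pi_E$ and $\pi_F$ share the characteristic polynomial $x^2 - ax + q$, and since they are ordinary its roots $\pi, \bar\pi$ are distinct. Hence the eigenvalues of conjugation on $H\otimes\Qbar_\ell$ are the four ratios $\pi/\pi,\ \pi/\bar\pi,\ \bar\pi/\pi,\ \bar\pi/\bar\pi$, so $1$ occurs with multiplicity exactly $2$ and $\pi-1$ has rank $2$ over $\Q_\ell$. Putting $\pi-1$ in Smith normal form over the PID $\Z_\ell$, the cokernel of a rank-$2$ endomorphism of $\Z_\ell^4$ is a copy of $\Z_\ell^2$ together with a torsion summand having at most $4-2 = 2$ invariant factors; its $\ell^m$-torsion, and therefore $\Sha(E/K)[\ell^m]$, is generated by at most two elements. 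It is precisely at this last step that both hypotheses are needed---``isogenous'' to force the eigenvalue $1$ to appear, and ``ordinary'' to keep its multiplicity from exceeding $2$---while the cohomological bookkeeping and the Smith-normal-form count above it are routine.
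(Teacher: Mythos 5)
Your proof is correct. For the exact sequence you do exactly what the paper does --- its entire proof is the one sentence preceding the corollary --- namely specialize the Theorem to $\phi=[n]$ and invoke Remark~\ref{rem:phi=l}. Where you genuinely diverge is the rank bound, which the paper never argues explicitly and which you establish by a heavier route: Tate's isogeny theorem identifies the left-hand term with $H^{G_k}/\ell^m$ for $H=\Hom_{\Z_\ell}(T_\ell F,T_\ell E)$, cohomology of $G_k\simeq\hat\Z$ gives $\Sha(E/K)[\ell^m]\simeq\coker(\pi-1\colon H\to H)[\ell^m]$, and the eigenvalue count $\{1,1,\pi/\bar\pi,\bar\pi/\pi\}$ (distinctness of $\pi,\bar\pi$ being exactly ordinarity) together with Smith normal form over $\Z_\ell$ bounds the torsion by two invariant factors. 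Every step checks out, and there is no circularity, since Tate's theorem is proved independently of anything in this paper. What your route buys is structure: it exhibits $\Sha(E/K)[\ell^\infty]$ as the torsion of $\coker(\pi-1)$, hence a product of at most two cyclic groups, anticipating Theorem~\ref{thm:sha}. What it costs is the appeal to Tate's theorem, which sits slightly awkwardly with the paper's remark immediately after the corollary that the corollary makes Tate's isogeny theorem \emph{equivalent} to finiteness of $\Sha(E/K)[n^\infty]$; and it is more machinery than is needed, since the bound follows elementarily from the sequence itself: for isogenous ordinary curves $\Hom_{G_k}(F,E)\simeq\Z^2$, so the left term is a free $\Z/n$-module of rank $2$ injecting into $\Hom(F[n],E[n])\simeq(\Z/n)^4$; an injective map of free $\Z/n$-modules has unit elementary divisors, so its image $A$ is a direct summand, $(\Z/n)^4=A\oplus B$ with $B\simeq(\Z/n)^2$, whence $\Hom_{G_k}(F[n],E[n])=A\oplus\left(\Hom_{G_k}(F[n],E[n])\cap B\right)$ and $\Sha(E/K)[n]$ embeds in $(\Z/n)^2$. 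Both arguments are valid; yours proves more, the elementary one assumes less.
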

	
	This says that the nontrivial elements of $\Sha(E/K)[n]$ arise from Galois-module morphisms $F[n] \to E[n]$ that do not come from an isogeny $F \to E$. From this one sees quite directly how Tate's isogeny theorem is equivalent to the finiteness of $\Sha(E/K)[n^\infty]$.

	\begin{Corollary}
	\label{sel-calc}
		Suppose $\phi : E \to E_1$ is an isogeny with kernel generated by a $k$-rational point of order $n$. Then 
		$\Sel^\phi(E/K)$ sits in a split exact sequence
		\[
			0 \to k^\times/k^{\times n} \to \Sel^\phi(E/K) \to F[n](k) \to 0\,.
		\]
		If $E[n] \subset E(k)$, then $\Sel^n(E/K)$ and sits in a split exact sequence
		\[
			0 \to k^\times/k^{\times n}\oplus k^\times/k^{\times n} \to \Sel^n(E/K) \to F[n](k)\oplus F[n](k) \to 0\,.
		\]
	\end{Corollary}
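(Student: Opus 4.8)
The plan is to obtain both sequences from the description of the Selmer group already in hand and then to evaluate its two outer terms over the finite field $k$ by means of the Weil pairing. Write $M$ for the kernel of the isogeny defining the Selmer group $\Sel^\phi(E/K)$, regarded as a $G_k$-module. The first thing to record is the Galois structure of $M$: the kernel is cyclic of order $n$ and, being the Cartier dual of the group generated by the given $k$-rational point, is isomorphic to $\mu_n$ as a $G_k$-module. This is the feature responsible for the appearance of $k^\times/k^{\times n}$.

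First I would assemble the basic exact sequence. By Proposition~\ref{prop:selunramified} the group $\Sel^\phi(E/K)$ is the unramified subgroup of $\HH^1(K,M)$, and the inflation–restriction analysis carried out in the proof of the theorem above exhibits it as an extension
\[
0 \to \HH^1(k,M) \to \Sel^\phi(E/K) \to \Hom_{G_k}(F[n],M) \to 0.
\]
Here $\HH^1(k,M)$ is the group of constant classes, which are unramified at every place and hence lie in the Selmer group, while the quotient consists of the unramified homomorphisms $G\to M$; since $M$ has exponent $n$ these factor through the maximal unramified cover of exponent $n$, namely $[n]\colon F\to F$, and therefore through $F[n]$.

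Next I would compute the two end terms with $M\simeq\mu_n$. The left-hand term is $\HH^1(k,\mu_n)=k^\times/k^{\times n}$ by the Kummer sequence over $k$ (Hilbert's Theorem~90). For the right-hand term, the Weil pairing $e_n\colon F[n]\times F[n]\to\mu_n$ is perfect, alternating and $G_k$-equivariant, so it gives an isomorphism of $G_k$-modules $F[n]\simeq\Hom(F[n],\mu_n)$; passing to $G_k$-invariants yields $\Hom_{G_k}(F[n],\mu_n)\simeq F[n]^{G_k}=F[n](k)$. This gives the first sequence. Its splitting I would get from the rational point $0_F\in F(k)$: the section $\Spec k\xrightarrow{0_F}F$ retracts the inflation map $\HH^1(k,M)\to\HH^1(K,M)$, and this retraction restricts to a splitting of the Selmer sequence — this is exactly the splitting of~\eqref{split} furnished by evaluation at $0_F$.

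For $\Sel^n(E/K)$ under $E[n]\subset E(k)$, the Weil pairing forces $\mu_n\subset k$, so $E[n]\simeq\mu_n^{\oplus2}$ as $G_k$-modules, and the same argument with $M=E[n]$ (equivalently, the first part applied to the two cyclic isogenies through which $[n]$ factors) replaces each end term by its square, giving the stated sequence with $(k^\times/k^{\times n})^{\oplus2}$ and $F[n](k)^{\oplus2}$, split once more by evaluation at $0_F$. I expect the main obstacle to be the Weil-pairing bookkeeping in the middle step: one must be careful with the Tate twist so that the quotient comes out as $F[n](k)$ itself rather than a twist of it, and one must check that the retraction coming from $0_F$ is compatible with the local unramified conditions defining the Selmer group, so that it genuinely splits the sequence rather than merely splitting the ambient cohomology.
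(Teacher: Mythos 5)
Your overall route --- Proposition~\ref{prop:selunramified} together with the inflation--restriction sequence to get $0 \to \HH^1(k,M) \to \Sel \to \Hom_{G_k}(F[n],M) \to 0$, then Kummer theory and the Weil pairing to evaluate the two ends, with the splitting coming from restriction at the place $0_F$ (legitimate on the Selmer group precisely because Selmer classes are unramified at $0_F$) --- is exactly how the corollary is meant to follow from the unlabeled Theorem preceding Corollary~\ref{cor:lSelmer}; the paper gives no separate argument. The second sequence is also handled correctly: $E[n]\subset E(k)$ forces $\mu_n\subset k$ by the Weil pairing, so $E[n]\simeq\mu_n\oplus\mu_n$ as constant $G_k$-modules and the same computation doubles each end term.

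There is, however, a genuine problem at your very first step, and it is not cosmetic. By the paper's own definition in \eqref{des-seq} (and Silverman's), the Selmer group attached to $\phi\colon E\to E_1$ consists of classes in $\HH^1(K,\ker\phi)$, and the hypothesis says $\ker\phi=\langle P\rangle\cong\Z/n\Z$ is a \emph{constant} Galois module; it is not ``the Cartier dual of the group generated by the given $k$-rational point'' --- that dual is $\ker\hat\phi\cong\mu_n$. With the correct module $M=\Z/n\Z$ the ends come out differently: the left term is $\HH^1(k,\Z/n\Z)\cong\Z/n\Z$, which agrees with $k^\times/k^{\times n}$ only when $n\mid \#k^\times$, and the right term is $\Hom_{G_k}(F[n],\Z/n\Z)$, a Tate twist of $F[n](k)$ (isomorphic to it as an abstract group, but not canonically). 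So what you have actually proven is the statement for $\Sel^{\hat\phi}(E/K)$, the Selmer group of the dual isogeny $E_1\to E$ with kernel $\mu_n$. That reading does reproduce the printed term $k^\times/k^{\times n}$, but it is not the group the paper uses downstream: in Theorem~\ref{sha-phi} the relation $a+c=b$ forces $b=\rk\Sel^\phi(E_1/K)$ (coefficients $\ker\phi$), and the claim there that $b=2$ whenever $h_\ell(F)=0$ holds even when $\ell\nmid\#k^\times$, which is only consistent with the constant-kernel computation, whose left term always contributes rank $1$. Concretely, for $E:y^2=x^3+2x+1$ over $\F_5$ (so $\#E(\F_5)=7$) and $\phi$ the $7$-isogeny with rational kernel, one has $\rk\Sel^\phi(E_1/K)=2$, while $k^\times/k^{\times 7}\oplus F[7](k)$ has rank $1$. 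So you need either to declare explicitly at the outset that you are computing the Selmer group of the isogeny with kernel $\mu_n$ (i.e.\ of $\hat\phi$), or to run the argument with $M=\Z/n\Z$ and note that the left term is then $\Z/n\Z$, matching $k^\times/k^{\times n}$ only under the extra hypothesis $\mu_n\subset k$. As written, the crucial identification of the coefficient module rests on a false duality assertion, and it silently changes which theorem is being proved.
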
	

 \begin{Theorem}
\label{sha-phi}
 	Suppose $\phi: E \to E_1$ is an isogeny of prime degree $\ell$, defined over $k$ whose kernel
	is generated by a $k$-rational point, where $E,E_1$ are ordinary and let
	$F$ be in the same component of the $\ell$-isogeny graph as $E$ and let $K=k(F)$. 
	Then the following are equivalent
 	\begin{enumerate}
 		\item\label{it1} $h_\ell(E_1) < h_\ell(E) \le h_\ell(F)$;
 		\item\label{it2} $\Hom(F,E_1) = \phi \Hom(F,E)$;
 		\item\label{it3}  $E_1(K)/\phi(E(K)) = E_1(k)/\phi(E(k))$;
 		\item\label{it4} $\Sha(E/K)[\phi] = \Sha(E/K)[\ell]$ has rank $2$.
	\end{enumerate}
	
	Denote by $\sigma$ the triple 
	$$(\rk E_1(K)/\phi(E(K)),\rk \Sel^\phi(E/K),\rk \Sha(E/K)[\phi])$$ 
	where $\rk$ is the rank over $\Z/\ell$.
	When the above equivalent conditions hold $\sigma = (1,3,2)$. Otherwise
	$\sigma = (2,2,0)$ if $h_\ell(F)=0$, $\sigma = (3,3,0)$ if $h_\ell(E) < h_\ell(E_1)\le h_\ell(F)$
	and $\sigma = (2,3,1)$, otherwise.
	
\end{Theorem}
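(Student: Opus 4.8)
My plan is to reduce the entire statement to computing two non-negative integers: the $\F_\ell$-dimension $t := \dim_{\F_\ell} F[\ell](k)$ of the rational $\ell$-torsion of $F$, and the dimension $d := \dim_{\F_\ell}\bigl(\Hom(F,E_1)/\phi\Hom(F,E)\bigr)$ of the cokernel of post-composition with $\phi$. Since $E,E_1,F$ are ordinary and isogenous, all geometric homomorphisms between them are already defined over $k$, so $\Hom_{G_k}(F,E_1)=\Hom(F,E_1)$; and since $\ker\phi$ is generated by a $k$-rational point, $E[\phi]\cong\Z/\ell$ is the trivial module, whence (invariants and coinvariants having equal dimension) $\dim_{\F_\ell}\Hom_{G_k}(F[\ell],E[\phi])=\dim_{\F_\ell}F[\ell](k)=t$. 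Feeding this into the bottom row of the exact diagram of the previous theorem (with $E'=E_1$, $n=\ell$, and $r_\phi$ injective) gives $\rk\Sha(E/K)[\phi]=t-d$. The split sequence~\eqref{split} identifies $E_1(K)/\phi(E(K))$ with $E_1(k)/\phi(E(k))\oplus\bigl(\Hom(F,E_1)/\phi\Hom(F,E)\bigr)$; over the finite field $k$ one has $\HH^1(k,E)=0$ and $\#E(k)=\#F(k)=\#E_1(k)$, so $E_1(k)/\phi(E(k))\cong\HH^1(k,\Z/\ell)\cong\Z/\ell$ has rank $1$, and hence $\rk E_1(K)/\phi(E(K))=1+d$. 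The descent sequence~\eqref{des-seq} then forces $\rk\Sel^\phi=(1+d)+(t-d)=1+t$. In particular $\sigma=(1+d,\,1+t,\,t-d)$, and \eqref{it2}$\Leftrightarrow$\eqref{it3} is immediate, both saying $d=0$.

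\textbf{Computing $t$.} Because $\ker\phi$ is generated by a rational $\ell$-torsion point, $\ell\mid\#E(k)=\#F(k)$, so $1$ is an eigenvalue of Frobenius $\pi$ on $F[\ell]$ and $t\ge 1$. Now $t=2$ exactly when $F[\ell]\subseteq F(k)$, i.e. when $\pi$ acts trivially on $F[\ell]$, i.e. when $(\pi-1)/\ell\in\End(F)$. Writing $\End(F)=\Z+c\,\calO_L$ inside $L=\End(F)\otimes\Q$ and $\pi=u+f\omega$ with $f=[\calO_L:\Z[\pi]]$ and $\calO_L=\Z+\Z\omega$, a direct divisibility computation shows $(\pi-1)/\ell\in\End(F)$ holds precisely when $v_\ell(c)\le v_\ell(f)-1$, that is $h_\ell(F)=v_\ell(f)-v_\ell(c)\ge 1$ (the auxiliary condition $\ell\mid u-1$ being automatic once $\ell\mid f$, via the trace $a\equiv 2\bmod\ell$). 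Hence $t=2$ iff $h_\ell(F)\ge 1$, and $t=1$ iff $h_\ell(F)=0$.

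\textbf{Computing $d$.} The key elementary criterion is that $\beta\in\phi\Hom(F,E)$ if and only if $\hat\phi\beta$ is divisible by $[\ell]$ in $\Hom(F,E)$, equivalently $\beta(F[\ell])\subseteq\ker\hat\phi$, where $\ker\hat\phi=\phi(E[\ell])$ is the unique upward line in $E_1[\ell]$. I would then argue with the tree structure of the volcano (Remark~\ref{trees}): when $\phi$ is downward and $h_\ell(E)\le h_\ell(F)$, the curve $F$ lies strictly above $E_1$, so the non-backtracking part of any isogeny $F\to E_1$ must enter $E_1$ through its unique parent $E$ via the edge $\phi$ (backtracking only contributes factors of $[\ell]$, which annihilate $F[\ell]$); thus every $\beta$ satisfies $\beta(F[\ell])\subseteq\phi(E[\ell])=\ker\hat\phi$ and $d=0$. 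Conversely, if $\phi$ is downward but $h_\ell(F)<h_\ell(E)$, a prime-to-$\ell$ isogeny (when $h_\ell(F)=h_\ell(E_1)$) or a non-backtracking upward isogeny $F\to E_1$ (when $h_\ell(F)<h_\ell(E_1)$) has cyclic kernel and violates the criterion, so $d\ge 1$; since $d=2$ would force the dual invariant for $\hat\phi$ to vanish, which by the downward characterization requires $\hat\phi$ downward (false), we get $d=1$. Applying this to $\hat\phi$ and using the duality $d(\phi)+d(\hat\phi)=2$ (from $\hat\phi_*\phi_*=[\ell]$ on the rank-$2$ lattice $\Hom(F,E)$) shows $d=2$ when $\phi$ is upward with $h_\ell(E_1)\le h_\ell(F)$, and $d=1$ in every remaining (including horizontal) case. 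I expect the main obstacle to be making this path-factoring argument fully rigorous at the level of isogenies rather than abstract walks — in particular handling the crater cycle and the passage between non-backtracking walks and cyclic-kernel isogenies.

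\textbf{Assembly.} Condition~\eqref{it1} says $\phi$ is downward with $h_\ell(E)\le h_\ell(F)$, which by the previous paragraph is equivalent to $d=0$, i.e. to~\eqref{it2}; and $d=0$ forces $h_\ell(F)\ge h_\ell(E)\ge 1$, hence $t=2$ and $\rk\Sha(E/K)[\phi]=t-d=2$. For the identification $\Sha(E/K)[\phi]=\Sha(E/K)[\ell]$ in~\eqref{it4} I would use $\Sha(E/K)[\phi]\subseteq\Sha(E/K)[\ell]$ together with the bound $\rk\Sha(E/K)[\ell]\le 2$ from Corollary~\ref{cor:lSelmer}, which forces equality once the $\phi$-rank equals $2$; conversely \eqref{it4} gives $t-d=2$, hence $t=2$ and $d=0$, i.e.~\eqref{it2}. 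This closes the loop \eqref{it1}$\Leftrightarrow$\eqref{it2}$\Leftrightarrow$\eqref{it3}$\Leftrightarrow$\eqref{it4}. The remaining triples now drop out of $\sigma=(1+d,\,1+t,\,t-d)$: when $h_\ell(F)=0$ one has $t=1$ and $d=1$, giving $(2,2,0)$; when $h_\ell(E)<h_\ell(E_1)\le h_\ell(F)$ one has $t=2$, $d=2$, giving $(3,3,0)$; and in all other cases $t=2$, $d=1$, giving $(2,3,1)$.
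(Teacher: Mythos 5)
Your reduction $\sigma = (1+d,\,1+t,\,t-d)$ is correct and clean, your computation of $t$ via conductors is right (and more explicit than what the paper offers for the Selmer rank), and your two tools --- the criterion $\beta \in \phi\Hom(F,E) \Leftrightarrow \beta(F[\ell]) \subseteq \ker\hat\phi$, and the index identity $d(\phi)+d(\hat\phi)=2$ --- are both valid and genuinely nice; neither appears in the paper. The case $d=0$ (your path argument, which is the paper's proof of \eqref{it1}$\Rightarrow$\eqref{it2}) and the case $d=2$ for upward $\phi$ with $h_\ell(E_1)\le h_\ell(F)$ (via duality; the paper's ``$a=3$ precisely when\dots'' step) are fine. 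But there is a genuine gap in pinning down $d=1$ in the remaining cases, and it is a circularity, not merely the walk-versus-isogeny bookkeeping you flag. To rule out $d=2$ when $\phi$ is downward with $h_\ell(F)<h_\ell(E)$, you invoke ``the downward characterization'': $d(\hat\phi)=0$ would force $\hat\phi$ downward. But the only implications you have proved are (downward and $h(\text{source})\le h_\ell(F)$) $\Rightarrow d=0$, and (downward and $h_\ell(F)<h(\text{source})$) $\Rightarrow d\ge 1$; the statement you need --- that a \emph{non-downward} isogeny can never have $d=0$ --- is exactly what is at stake, and the duality $d(\phi)+d(\hat\phi)=2$ cannot supply it, since duality is symmetric and is equally consistent with $(d(\phi),d(\hat\phi))=(2,0)$ as with $(1,1)$. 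The same circle occurs for upward $\phi$ with $h_\ell(F)<h_\ell(E_1)$ (there you need $d(\phi)\ge 1$, but neither of your downward facts applies to $\phi$ itself, and duality only yields $d(\phi)\le 1$), and for horizontal $\phi$, where neither $\phi$ nor $\hat\phi$ is downward, so nothing you proved applies at all.

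What is missing is a construction: in each of these cases you must exhibit an isogeny $F\to E$ (respectively $F\to E_1$) whose image of $F[\ell]$ avoids the forbidden line, i.e.\ a path entering the top vertex of $\phi$ through a second child or through a crater edge. This needs Theorem~\ref{kohel} to move $F$ prime-to-$\ell$ into the right tree, plus a case analysis (is the top vertex on the crater; does it have a second downward edge when $\ell=2$ and two crater edges are present; crater of length one; etc.). Degrees alone cannot do it: in exactly these cases the minimal $\ell$-valuation of degrees in $\phi\Hom(F,E)$ equals the minimal one in $\Hom(F,E_1)$, so only the torsion-image criterion can separate them. The paper fills precisely this hole by two different means: Lemma~\ref{lem1}, whose algebraic arguments (the contradiction $\iota(\End(E))=\End(E_1)$ for upward $\phi$, and the unique-factorization-of-ideals argument for horizontal $\phi$) dispose of the case $F=E$ or $F$ below $E$, and explicit ``path from $F$ to $E_1$ not passing through $E$'' constructions for the rest. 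A related small inaccuracy: your claim that the violating isogeny in the downward case works because it ``has cyclic kernel'' is not right as stated --- a cyclic $\ell$-power isogeny sends $F[\ell]$ to a \emph{line}, so cyclicity alone does not violate the criterion. What saves you there is that any path entering $E_1$ from below ends in a downward edge of $E_1$, whose line is never $\ker\hat\phi$ (the line of $E_1$'s upward edge); that argument is correct where you use it, but it is exactly the argument that fails to transfer to the problematic cases above, because there the forbidden line is itself the line of a downward edge.
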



\begin{proof}
The equivalence of \eqref{it2} and \eqref{it3} follows immediately from the split exact sequences \eqref{split} for $E$ and $E_1$.

	Suppose $h_\ell(E_1) < h_\ell(E) \le h_\ell(F)$. Let $\alpha \in \Hom(F,E_1)$ be any isogeny. We can factor $\alpha$ as $\alpha_\ell \circ \alpha'$ with $\alpha' : F \to F'$ of degree prime to $\ell$ and $\alpha_\ell:F' \to E_1$ of $\ell$-primary degree. By Theorem~\ref{kohel} $h_\ell(F) = h_\ell(F')$. Thus $\alpha_\ell$ is a path on the $\ell$-volcano consisting of downward edges, so on one the trees (see remark \ref{trees}) and thus must factor through $\phi$.
Since $\alpha_\ell$ has $\ell$-primary degree this implies that $\alpha_\ell$, and hence also $\alpha$, lies in $\phi\Hom(F,E)$.
	
	Let us now consider the cases $h_\ell(E) < h_\ell(E_1),h_\ell(F)$ or $h_\ell(E) > h_\ell(E_1),h_\ell(F)$. In either case there is an isogeny $\alpha : F \to E_1$ with $v_\ell(\deg(\alpha)) = |h_\ell(F)-h_\ell(E_1)|$ which never reaches the level of $E$.
This isogeny can be constructed as follows. Take $F'$ on the tree containing $E_1$ at the level of $F$ which, in addition, is a child of $E_1$ if $h(E_1)<h(F)$.
Then by Theorem \ref{kohel} there is an isogeny $F \to F'$ of degree prime to
$\ell$. There is also a path on the tree from $F'$ to $E_1$ not going through $E$ which defines an isogeny and the isogeny claimed earlier is the composition of those two.
This isogeny cannot possibly lie in $\phi \Hom(F,E)$, since any $\gamma \in \Hom(F,E)$ has $v_\ell(\deg(\gamma)) \ge |h_\ell(F)-h_\ell(E)| > |h_\ell(F)-h_\ell(E_1)|$. Thus \eqref{it2} does not hold.
	
	It remains to consider the case when $h_\ell(E_1) \ge h_\ell(E)$. Lemma \ref{lem1} handles this in the case $F = E$ or is in the branch of the isogeny graph
below $E$. Otherwise, it is clear that there is a path on the isogeny graph
from $F$ to $E_1$ not passing through $E$, showing that \ref{it2},\ref{it3} do not
hold and, from this and the descent sequence \ref{des-seq}, it follows that \ref{it4} does
not hold either.
	
To complete the proof, let $\sigma=(a,b,c)$ and note that $a+c=b$ and that $b=2$ if $h_\ell(F)=0$ and $b=3$, otherwise. Also, $a \ge 1$ because of the contribution of
torsion. In order to have $a=1$ we must have that all isogenies $F \to E_1$ factor through $E$. This can only be the
case when $h_\ell(F) \ge h_\ell(E) > h_\ell(E_1)$. From this we can already conclude that $\sigma = (2,2,0)$ if $h_\ell(F)=0$. We also
get that $\sigma = (1,3,2)$ if $h_\ell(F) \ge h_\ell(E) > h_\ell(E_1)$.

We claim that $a=3$ precisely when $h_\ell(F) \ge h_\ell(E_1) > h_\ell(E)$. Just as in the previous argument, an isogeny $F \to E$
must factor through $E_1$ via $\hat\phi$ and so an isogeny in $\phi(E(K))$ must factor through $\phi \circ \hat\phi = [\ell]$
and $E_1(K)/\phi(E(K)) = E_1(K)/\ell(E_1(K))$ which has rank $3$. Hence $\sigma = (3,3,0)$.

If $h_\ell(E_1)=h_\ell(E)$, then $a=2$, so $\sigma = (2,3,1)$ unless $h_\ell(F)=0$. If $0 < h_\ell(F) \le h_\ell(E) < h_\ell(E_1)$, then
$\sigma = (2,3,1)$ as we can find a path from $E_1$ to some $E'$ at the level of $F$ not passing through $E$ and 
we can then use an isogeny $F \to E'$ of degree prime to $\ell$. Similarly, for $0 < h_\ell(F) \le h_\ell(E_1) < h_\ell(E)$, 
$\sigma = (2,3,1)$ also. These are all the possibilities since $|h_\ell(E) - h_\ell(E_1)| \le 1$.

\end{proof}	

\begin{Remark}
If there is an isogeny $\phi: E \to E_1$ of degree $\ell$ whose kernel is not generated by a $k$-rational point, then the
$\ell$-isogeny graph has height $m=0$. In this case, one verifies that $\sigma = (1,1,0)$.
\end{Remark}

\section{Calculation of $\Sha$}

\begin{Theorem}\label{thm:sha}
	Suppose that $E$, $F$ are isogenous ordinary elliptic curves over $k$ and set $\mathcal{O} = \End(E) \cap \End(F)$, the intersection taking place in $\Q(\pi)$. There is an isomorphism of groups $\Sha(E/k(F)) \simeq \mathcal{O}/\Z[\pi] \times \mathcal{O}/\Z[\pi]$. In particular, for any prime $\ell$ we have $v_\ell(\#\Sha(E/k(F))) = 2 \min \{ h_\ell(E),h_\ell(F) \}$.
\end{Theorem}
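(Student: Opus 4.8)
The plan is to compute the $\ell$-primary part $\Sha(E/K)[\ell^\infty]$ (with $K=k(F)$) for each prime $\ell$ separately, using that $\Sha(E/K)$ is finite, and then reassemble. Fix $\ell$, write $R=\Z_\ell[\pi]$, and regard the Tate modules $T_\ell E,T_\ell F$ as $R$-modules. Taking $n=\ell^j$ in Corollary~\ref{cor:lSelmer} and letting $j\to\infty$, $\Sha(E/K)[\ell^\infty]$ is the cokernel of the reduction map $\Hom_{G_k}(F,E)\otimes\Z_\ell\to\Hom_{G_k}(F[\ell^j],E[\ell^j])$ for $j\gg 0$. Tate's isogeny theorem identifies the source with $\Hom_R(T_\ell F,T_\ell E)$, while the target is $\Hom_R(T_\ell F,T_\ell E/\ell^j)$; applying $\Hom_R(T_\ell F,-)$ to $0\to T_\ell E\xrightarrow{\ell^j}T_\ell E\to T_\ell E/\ell^j\to 0$ then identifies the cokernel with the torsion of $\operatorname{Ext}^1_R(T_\ell F,T_\ell E)$. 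Thus the first goal is the isomorphism
\[
\Sha(E/K)[\ell^\infty]\;\cong\;\operatorname{Ext}^1_R(T_\ell F,T_\ell E).
\]

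Next I would convert this into a problem about orders in a quadratic algebra. For ordinary $E$, Tate gives $\End_{G_k}(T_\ell E)=\calO_{E,\ell}:=\End(E)\otimes\Z_\ell$, so $T_\ell E$ is a lattice in $L_\ell:=\Q(\pi)\otimes\Q_\ell$ whose multiplier ring is exactly $\calO_{E,\ell}$; a proper ideal of a quadratic order is invertible, hence free of rank one over the semilocal ring $\calO_{E,\ell}$. Therefore $T_\ell E\cong\calO_{E,\ell}$ and $T_\ell F\cong\calO_{F,\ell}$ as $R$-modules, and everything reduces to computing $\operatorname{Ext}^1_R(\calO_{F,\ell},\calO_{E,\ell})$, where $R\subseteq\calO_{E,\ell},\calO_{F,\ell}\subseteq\calO_{L,\ell}$ are orders in $L_\ell$ whose $\ell$-adic conductors record, via $h_\ell$, their level in the volcano.

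The orders containing $R$ are totally ordered by inclusion. Assume first $h_\ell(F)\le h_\ell(E)$, so $\calO_{F,\ell}\subseteq\calO_{E,\ell}$ and $\Hom_R(\calO_{F,\ell},\calO_{E,\ell})=\calO_{E,\ell}$. Feeding $0\to R\to\calO_{F,\ell}\to\calO_{F,\ell}/R\to 0$ into $\Hom_R(-,\calO_{E,\ell})$, the restriction map $\calO_{E,\ell}\to\calO_{E,\ell}$ is the identity, so $\operatorname{Ext}^1_R(\calO_{F,\ell},\calO_{E,\ell})\cong\operatorname{Ext}^1_R(\calO_{F,\ell}/R,\calO_{E,\ell})$. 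A short calculation shows $\calO_{F,\ell}/R$ is the cyclic module $R/I$ with $I=(\ell^{d},t)$, where $d=h_\ell(F)$ and $t=\pi-c$ (for a suitable $c\in\Z_\ell$) is the $\Z_\ell$-generator of $R$ acting by zero on the quotient. Then $0\to I\to R\to R/I\to 0$ yields
\[
\operatorname{Ext}^1_R(\calO_{F,\ell},\calO_{E,\ell})\;\cong\;(\calO_{E,\ell}:I)/\calO_{E,\ell}\;=\;\bigl(\ell^{-d}\calO_{E,\ell}\cap t^{-1}\calO_{E,\ell}\bigr)/\calO_{E,\ell}.
\]

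The final step is to evaluate this colon-ideal quotient. Choosing a $\Z_\ell$-basis adapted to $\calO_{E,\ell}$ and to a generator of $\calO_{L,\ell}$, one writes out the two overlattices $\ell^{-d}\calO_{E,\ell}$ and $t^{-1}\calO_{E,\ell}$, intersects them, and checks that the quotient by $\calO_{E,\ell}$ is isomorphic to $(\Z/\ell^{d})^{2}$; since $\#\bigl((\calO_{E,\ell}\cap\calO_{F,\ell})/\Z[\pi]\bigr)_\ell=\ell^{d}$, this is exactly $(\calO_\ell/\Z[\pi])_\ell^{\oplus 2}$ with $\calO_\ell=\End(E)\cap\End(F)$. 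The main obstacle is carrying this out uniformly across the three splitting behaviours of $\ell$ in $\Q(\pi)$, and in particular verifying that the group is the \emph{square} $(\calO_\ell/\Z[\pi])_\ell^{\oplus 2}$ rather than merely a group of the right order; the complementary case $h_\ell(F)>h_\ell(E)$ requires the same lattice computation applied to $\operatorname{Ext}^1_R(\calO_{E,\ell},\calO_{F,\ell})$ together with the symmetry $\#\operatorname{Ext}^1_R(M,N)=\#\operatorname{Ext}^1_R(N,M)$ (visible from the matrix factorisation of these modules over the hypersurface $R$, or from the Weil/Cartier duality of the earlier lemma). Reassembling over all $\ell\ne p$ gives $\Sha(E/K)\cong\calO/\Z[\pi]\times\calO/\Z[\pi]$, once one notes that $\Z[\pi]$ is maximal at $p$ for ordinary curves—so the $p$-part of the target, and (by ordinarity) $\Sha(E/K)[p^\infty]$, both vanish; the displayed valuation formula is then immediate from $v_\ell(\#(\calO/\Z[\pi]))=\min\{h_\ell(E),h_\ell(F)\}$.
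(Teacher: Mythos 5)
Your route is genuinely different from the paper's. The paper also starts from Corollary~\ref{cor:lSelmer}, but only to see that $\Sha(E/k(F))[\ell^\infty]$ has at most two cyclic factors; the order is then extracted from Milne's Birch--Swinnerton-Dyer formula: Lemma~\ref{lem1} gives $\Hom(F,E)=\phi\End(F)$ for a downward $\phi$, hence $R_{E/k(F)}=\deg(\phi)^2R_{F/k(F)}$, an explicit trace/regulator computation settles $F=E$, and the case $h_\ell(E)>h_\ell(F)$ is reduced to this via the swap $\Sha(E/k(F))\simeq\Sha(F/k(E))$ coming from $\Br(E\times F)$. You instead prove $\Sha(E/K)[\ell^\infty]\simeq\operatorname{Ext}^1_{\Z_\ell[\pi]}(T_\ell F,T_\ell E)$ using Tate's isogeny theorem (legitimate external input here, as is finiteness of $\Sha$), reduce to orders via $T_\ell E\simeq\calO_{E,\ell}$ (the standard proper-ideals-are-invertible argument), and compute the Ext group. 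I checked your colon-ideal computation in the case $\calO_{F,\ell}\subseteq\calO_{E,\ell}$ (i.e.\ $h_\ell(F)\le h_\ell(E)$): with $I=(\ell^d,\pi-c)$ one finds $(\calO_{E,\ell}:I)=\ell^{-d}\calO_{E,\ell}$ exactly, because the constraint coming from $\pi-c$ is implied by the one coming from $\ell^d$; so the quotient is $\calO_{E,\ell}/\ell^d\calO_{E,\ell}\simeq(\Z/\ell^d)^2$, uniformly in the splitting behaviour of $\ell$ --- the case analysis you feared is unnecessary. A merit of your approach is that it produces the group structure (the square shape) directly, rather than inferring it from the order together with the bound on the number of cyclic factors.

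The genuine gap is the complementary case $h_\ell(F)>h_\ell(E)$, i.e.\ $\calO_{E,\ell}\subsetneq\calO_{F,\ell}$. There the cardinality symmetry $\#\operatorname{Ext}^1_R(M,N)=\#\operatorname{Ext}^1_R(N,M)$ that you invoke is too weak: knowing $\#\operatorname{Ext}^1_R(\calO_{F,\ell},\calO_{E,\ell})=\ell^{2e}$ with $e=h_\ell(E)$, even combined with the rank bound from Corollary~\ref{cor:lSelmer}, does not exclude $\Z/\ell^{e+1}\times\Z/\ell^{e-1}$. And the structure really is at stake in this case, since the computation changes shape: now $\Hom_R(\calO_{F,\ell},\calO_{E,\ell})=(\calO_{E,\ell}:\calO_{F,\ell})=\ell^{d-e}\calO_{F,\ell}$ is a proper sublattice of $\calO_{E,\ell}$, the restriction map is no longer the identity, one gets $(\calO_{E,\ell}:I)/\calO_{E,\ell}\simeq\Z/\ell^{e}\times\Z/\ell^{d}$, and $\operatorname{Ext}^1$ is its quotient by a cyclic group of order $\ell^{d-e}$ --- which subgroup is divided out is exactly the issue. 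The fix is to upgrade the symmetry to an isomorphism of groups, and your own parenthetical hints supply one: either use duality over the Gorenstein ring $R$ (or transposition of the matrix factorization), giving $\operatorname{Ext}^1_R(\calO_{F,\ell},\calO_{E,\ell})\simeq\operatorname{Ext}^1_R(\calO_{E,\ell},\calO_{F,\ell})$, which reduces to the case already settled; or use Weil/Cartier duality at finite level, which identifies $\Hom_{G_k}(F[\ell^j],E[\ell^j])\simeq\Hom_{G_k}(E[\ell^j],F[\ell^j])$ compatibly with $\alpha\mapsto\hat\alpha$ on isogenies and hence gives $\Sha(E/k(F))[\ell^j]\simeq\Sha(F/k(E))[\ell^j]$ --- the descent-theoretic avatar of the paper's Brauer-group swap. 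With that single isomorphism made precise, your argument is complete.
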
	 

\begin{Remark}
	This generalizes the observation of Milne~\cite[p 102]{Milne} that $\#\Sha(E/k(E)) = [\End(E):\Z[\pi]]^2$.
\end{Remark}

\begin{Remark}
	It would be interesting to find a meaningful interpretation of the isomorphism of the theorem and, In particular, identify a natural symplectic pairing on $\mathcal{O}/\Z[\pi] \times \mathcal{O}/\Z[\pi]$ which corresponds to the Cassels-Tate pairing.
\end{Remark}

\begin{proof}
	
	Let $\ell$ be a prime. Since $\Hom_{G_k}(F,E)/\ell$ has dimension $2$ over $\F_\ell$, Corollary~\ref{cor:lSelmer} shows that $\Sha(E/k(F))[\ell]$ has dimension $0$ or $2$. Therefore $\Sha(E/k(F))[\ell^\infty]$ is a product of two cyclic groups. Since $\mathcal{O}/\Z[\pi]$ is cyclic and $v_\ell([\mathcal{O}:\Z[\pi]]) = \min\{h_\ell(E),h_\ell(F)\}$ we see that the first statement of the theorem follows from the claim that, for each $\ell$, $v_\ell(\#\Sha(E/k(F))[\ell^\infty]) = 2 \min\{ h_\ell(E),h_\ell(F)\}$. This holds trivially for the $p$-part as $\Sha(E/k(F))[p] = 0$ and the $p$-isogeny graph has just one level (see Section~\ref{sec:p}).
	
	Note that $\Sha(E/k(F)) \simeq \Sha(F/k(E))$, since both are isomorphic to the Brauer group of the surface $E\times F$ \cite[Theorem 3.1]{Tate-BSD}. By swapping the roles of $E$ and $F$ if necessary we may therefore assume that $h_\ell(E) \le h_\ell(F)$. Let $E'$ be the unique elliptic curve lying below $F$ in the $\ell$-isogeny graph (i.e., such that there is a downward isogeny $F \to E'$, possibly the identity) with $h_\ell(E') = h_\ell(E)$. Since $h_\ell(E) = h_\ell(E')$, Theorem~\ref{kohel} implies that there is an isogeny $\alpha : E' \to E$ of degree prime to $\ell$. This isogeny induces an isomorphism $\Sha(E'/k(F)[\ell^\infty]\simeq \Sha(E/k(F))[\ell^\infty]$. So, replacing $E$ by $E'$ if needed, we are reduced to the case that $E$ lies below $F$ in the $\ell$-isogeny graph.
		
	Suppose the characteristic polynomial of Frobenius on $F/k$ factors as $x^2 + tx + q = (x-a)(x-b)$. By \cite[Theorem 3]{Milne} we have 
		\begin{equation}\label{eq:BSD}
			\#\Sha(E/k(F)) \cdot R_{E/k(F)} = q\left(1-\frac{a}{b}\right)\left(1-\frac{b}{a}\right)\,,
		\end{equation}
		where $R_{E/k(F)} = \left| \det \Tr(\alpha_i\beta_j) \right|,$ for any bases $\{ \alpha_i \}$ and $\{ \beta_i\}$ of the free $\Z$-modules $\Hom(F,E)$ and $\Hom(E,F)$. By Lemma~\ref{lem1} we have $\Hom(F,E) = \phi \End(F)$. It follows that $R_{E/k(F)} = \deg(\phi)^2 R_{F/k(F)}$. Comparing~\eqref{eq:BSD} for $F/k(F)$ and $E/k(F)$ shows that $\#\Sha(F/k(F)) =\deg(\phi)^2 \#\Sha(E/k(F)) = \ell^{2(h_\ell(F)-h_\ell(E))} \#\Sha(E/k(F))$. It thus suffices to verify the claim when $F = E$.
		
		Let $\frak{f},\frak{g} \in \Z$ be the conductors of the orders $\Z[\pi]$ and $\End(F)$ in the quadratic imaginary field $L = \End(F) \otimes \Q$, so that $\Z[\pi] = \Z[\frak{f}D]$, $\End(F) = \Z[\frak{g}D]$ and $\mathcal{O}_L = \Z[D]$, where $D = \frac{d_L + \sqrt{d_L}}{2}$, $d_L$ is the fundamental discriminant. In particular $\End(F)/\Z[\pi] \simeq \Z/n\Z$ where $n = \frak{f}/\frak{g}$. Then
	\[
		R_{F/k(F)} = \left| \det
	\begin{bmatrix}
		\Tr([1]) & \Tr(\frak{g}D) \\ \Tr (\frak{g}D) & \Tr (\frak{g}^2D^2) \,
	\end{bmatrix} \right|\,.
	\]
	For any integer $h$ we have 
	\[  \left| \det
	\begin{bmatrix}
		\Tr([1]) & \Tr(hD) \\ \Tr (hD) & \Tr (h^2D^2) 
	\end{bmatrix}
	\right| = \left|	\begin{bmatrix}
		2 & hd_L \\ hd_L & h^2(d_L^2 + d_L)/2 ) 
	\end{bmatrix}
	\right| = h^2d_L
	\]
	As $\pi = \frak{f}D$ we have
	\[
	\frak{f}^2d_L = \left|\det
	\begin{bmatrix}
		 \Tr([1]) & \Tr(\pi) \\ \Tr (\pi) & \Tr (\pi^2) 
	\end{bmatrix}\right|
	= |2(a^2 + b^2) - (a+b)^2| = (a-b)(b-a) = q\left(1-\frac{b}{a}\right)\left(1-\frac{a}{b}\right) \]
	Comparing with~\eqref{eq:BSD} we see that $\#\Sha(F/k(F)) = [\End(F):\Z[\pi]]^2$, as required.
\end{proof}

\begin{Corollary}
	$E$ has height $m$ in the $\ell$-isogeny graph if and only if $\Sha(E/k(E))[\ell^\infty] \simeq \Z/\ell^m\times \Z/\ell^m$.
\end{Corollary}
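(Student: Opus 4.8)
The plan is to obtain the corollary as the $F = E$ case of Theorem~\ref{thm:sha}, so essentially all the content is already in hand and the work is bookkeeping. First I would specialize: when $F = E$ the order $\mathcal{O} = \End(E) \cap \End(E)$ is just $\End(E)$ itself, so the isomorphism supplied by Theorem~\ref{thm:sha} reads $\Sha(E/k(E)) \simeq \End(E)/\Z[\pi] \times \End(E)/\Z[\pi]$.

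Next I would pass to $\ell$-primary parts. I recall from the preliminaries the identification $h_\ell(E) = v_\ell([\End(E):\Z[\pi]])$, and from the proof of Theorem~\ref{thm:sha} the fact that $\End(E)/\Z[\pi]$ is a finite cyclic group, hence isomorphic to $\Z/[\End(E):\Z[\pi]]$. The $\ell$-primary part of a cyclic group $\Z/N$ is $\Z/\ell^{v_\ell(N)}$, so here it is $\Z/\ell^{h_\ell(E)}$. Taking $\ell$-primary parts in the displayed isomorphism then gives $\Sha(E/k(E))[\ell^\infty] \simeq \Z/\ell^{h_\ell(E)} \times \Z/\ell^{h_\ell(E)}$. (Alternatively one can avoid invoking cyclicity directly and instead combine the order formula $v_\ell(\#\Sha(E/k(E))) = 2h_\ell(E)$ from Theorem~\ref{thm:sha} with Corollary~\ref{cor:lSelmer}, which forces $\Sha(E/k(E))[\ell]$ to have rank $0$ or $2$ and hence the $\ell$-primary part to be a product of two cyclic groups of equal order.)

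Finally I would read off the equivalence. By definition $E$ has height $m$ in the $\ell$-isogeny graph means precisely $h_\ell(E) = m$, and by the previous display this holds if and only if $\Sha(E/k(E))[\ell^\infty] \simeq \Z/\ell^m \times \Z/\ell^m$. I do not expect any genuine obstacle: the only step carrying real content is the passage between the group-theoretic height $h_\ell(E)$ and the arithmetic invariant $v_\ell([\End(E):\Z[\pi]])$ together with the cyclicity of $\End(E)/\Z[\pi]$, both of which are recorded earlier, so the corollary follows by direct substitution into Theorem~\ref{thm:sha}.
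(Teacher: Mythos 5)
Your proposal is correct and matches the paper's intent exactly: the paper states this corollary without proof, treating it as the immediate specialization $F = E$ of Theorem~\ref{thm:sha}, which is precisely what you carry out. Your bookkeeping steps (namely $\mathcal{O} = \End(E)$, cyclicity of $\End(E)/\Z[\pi]$, and the identification $h_\ell(E) = v_\ell([\End(E):\Z[\pi]])$ from the preliminaries) are exactly the facts the paper relies on implicitly, so nothing is missing.
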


\begin{Corollary}
(\cite[Theorem 5]{MMSTV}) If $E$ has full rational $\ell^m$-torsion, then $E$ has height at least $m$ in the $\ell$-isogeny graph.
\end{Corollary}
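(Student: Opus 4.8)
The plan is to argue directly from the arithmetic description of height recorded in the preliminaries, namely $h_\ell(E) = v_\ell([\End(E):\Z[\pi]])$, and to translate the hypothesis of full rational $\ell^m$-torsion into a divisibility statement about the Frobenius endomorphism $\pi$. Since $E(k)$ consists exactly of the $\pi$-fixed points, the assumption $E[\ell^m]\subseteq E(k)$ says precisely that $\pi$ acts as the identity on $E[\ell^m]$, equivalently that the endomorphism $\pi-1$ annihilates $E[\ell^m]$. Throughout I take $E$ to be ordinary, as in the rest of this section, so that $L=\End(E)\otimes\Q$ is imaginary quadratic and $\pi\notin\Z$.

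First I would use separability to factor $\pi-1$. Because $\ell\neq p=\Char(k)$, the multiplication map $[\ell^m]$ is separable with kernel exactly $E[\ell^m]$. Since $\pi-1$ vanishes on $\ker[\ell^m]$, it factors through $[\ell^m]$, so there is a (unique) endomorphism $\psi\in\End(E)$ with $\pi-1=\ell^m\psi$. As $[\ell^m]$ is central this is an honest equality in $\End(E)$. Moreover $\psi\notin\Z$: if it were an integer then $\pi-1=\ell^m\psi$ would lie in $\Z$, contradicting $\pi\notin\Z$.

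The remaining step is index bookkeeping among orders in $L$. From $\pi=1+\ell^m\psi$ I get the equality of orders $\Z[\pi]=\Z[\ell^m\psi]$, since each generator lies in the other ring. As $\psi\notin\Z$, the set $\{1,\psi\}$ is a $\Z$-basis of $\Z[\psi]$ and $\{1,\ell^m\psi\}$ is a $\Z$-basis of $\Z[\ell^m\psi]$, so $[\Z[\psi]:\Z[\pi]]=\ell^m$. Because $\Z[\psi]\subseteq\End(E)$, multiplicativity of indices gives $[\End(E):\Z[\pi]]=[\End(E):\Z[\psi]]\cdot\ell^m$, whence $\ell^m\mid[\End(E):\Z[\pi]]$ and therefore $h_\ell(E)=v_\ell([\End(E):\Z[\pi]])\ge m$.

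The only genuinely substantive point is the factorization $\pi-1=\ell^m\psi$, and the sole thing to watch there is that it rests on the separability of $[\ell^m]$, i.e. on the standing hypothesis $\ell\neq p$; everything afterward is elementary manipulation of quadratic orders. I would also remark that this route does not invoke the $\Sha$ computation of Theorem~\ref{thm:sha}. If one preferred a proof that passes through $\Sha$, the conclusion could instead be extracted by combining the preceding corollary (height $m$ is equivalent to $\Sha(E/k(E))[\ell^\infty]\simeq\Z/\ell^m\times\Z/\ell^m$) with the same endomorphism input, but the direct argument above seems cleanest.
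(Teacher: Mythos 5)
Your proof is correct, but it takes a genuinely different route from the paper's. The paper deduces the statement from its descent machinery: applying Corollary~\ref{cor:lSelmer} with $F = E$ and $n = \ell^m$, full rational $\ell^m$-torsion makes the Galois action on $E[\ell^m]$ trivial, so $\Hom_{G_k}(E[\ell^m],E[\ell^m])$ has rank $4$ over $\Z/\ell^m$ while $\End(E)/\ell^m\End(E)$ has rank $2$, forcing $\Sha(E/k(E))[\ell^m] \simeq \Z/\ell^m \times \Z/\ell^m$; the conclusion then follows from the corollary immediately preceding this one, which identifies the height of $E$ with the common exponent of the two cyclic factors of $\Sha(E/k(E))[\ell^\infty]$. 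That route ultimately rests on Theorem~\ref{thm:sha}, and hence on Milne's proof of the Birch--Swinnerton-Dyer formula for constant curves. Your argument instead works entirely on the endomorphism-ring side: from $E[\ell^m] \subseteq E(k) = \ker(\pi-1)$ and separability of $[\ell^m]$ (valid since $\ell \neq p$) you factor $\pi - 1 = \ell^m\psi$ with $\psi \in \End(E)\setminus\Z$, and the index bookkeeping $[\End(E):\Z[\pi]] = \ell^m\,[\End(E):\Z[\psi]]$ gives $h_\ell(E) = v_\ell([\End(E):\Z[\pi]]) \ge m$ directly from Kohel's identity quoted in the preliminaries. Each step checks out, including the two points that need care: $\pi \notin \Z$ because $E$ is ordinary, and $\pi - 1$ is a nonzero isogeny so the factorization criterion for separable isogenies applies. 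As for what each approach buys: yours is elementary and self-contained, essentially the classical argument from the volcano literature, and never touches $\Sha$ or BSD; the paper's purpose in stating this corollary, however, is precisely to exhibit that its computation of $\Sha(E/k(E))$ recovers the known result of \cite{MMSTV}, so it deliberately routes the proof through the Tate--Shafarevich group rather than through the endomorphism ring.
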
	

\begin{proof}
	If $E(k)$ contains the $\ell^m$-torsion, then by Corollary~\ref{cor:lSelmer} we have that $\Sha(E/k(E))[\ell^m] \simeq \Z/\ell^m\Z \times \Z/\ell^m\Z$. So the result follows from the previous corollary.
\end{proof}

\begin{Corollary}
	Any downward isogeny of degree $\ell^n$ must annihilate the $\ell^n$-torsion in $\Sha(E/k(E))$.
\end{Corollary}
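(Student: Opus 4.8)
The plan is to realize the induced map $\psi_* : \Sha(E/K) \to \Sha(E'/K)$ (with $K = k(E)$ fixed and $\psi : E \to E'$ the given downward isogeny of degree $\ell^n$) concretely on the Galois-cohomological presentation of Corollary~\ref{cor:lSelmer}, and then to reduce the vanishing on $\ell^n$-torsion to a single divisibility statement in $\End(E)$ that Lemma~\ref{lem1} makes transparent. First I would observe that a downward isogeny of degree $\ell^n$ forces $n \le h_\ell(E) =: h$, since each downward step lowers the height by one (Theorem~\ref{kohel}), and that $\ker\psi$ is then cyclic of order $\ell^n$. Applying Corollary~\ref{cor:lSelmer} with $F = E$ to both $E$ and $E'$ yields the exact sequence
\[
0 \to \End(E)/\ell^n \xrightarrow{\ \mathrm{res}\ } \Hom_{G_k}(E[\ell^n],E[\ell^n]) \to \Sha(E/K)[\ell^n] \to 0
\]
together with its analogue for $E'$, whose middle term is $\Hom_{G_k}(E[\ell^n],E'[\ell^n])$ and whose left term is $\Hom_{G_k}(E,E')/\ell^n$, the restriction maps being $\alpha \mapsto \alpha|_{E[\ell^n]}$. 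Post-composition with $\psi$ gives a morphism between these two sequences and induces $\psi_*$ on the cokernels, so the entire question is encoded in the post-composition map $(\psi\circ-)$ on Hom-groups.

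Next I would pin down the two outer columns. By Theorem~\ref{thm:sha} we have $\Sha(E/K)[\ell^\infty] \cong (\Z/\ell^h)^2$, hence $\Sha(E/K)[\ell^n] \cong (\Z/\ell^n)^2$; feeding this back into the sequence above shows $\Hom_{G_k}(E[\ell^n],E[\ell^n])$ has order $\ell^{4n}$ and therefore equals the full endomorphism ring $\End(E[\ell^n]) \cong M_2(\Z/\ell^n)$ — in this range every group endomorphism of $E[\ell^n]$ is automatically Galois-equivariant. On the other side, Lemma~\ref{lem1} gives $\Hom(E,E') = \psi\End(E)$, so the left vertical map $\End(E)/\ell^n \to \Hom(E,E')/\ell^n$ is an isomorphism and the image of the restriction map for $E'$ is exactly $(\psi\circ-)(\operatorname{im}\mathrm{res})$. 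A short diagram chase then shows that $\psi_* = 0$ on $\Sha(E/K)[\ell^n]$ if and only if
\[
\End(E[\ell^n]) \;=\; \operatorname{im}(\mathrm{res}) \;+\; \Hom(E[\ell^n],\ker\psi),
\]
the second summand being precisely the kernel of $(\psi\circ-)$ on $\End(E[\ell^n])$.

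Both summands have order $\ell^{2n}$ (the first because $\mathrm{res}$ is injective with source $\End(E)/\ell^n$, the second because $\ker\psi$ is cyclic of order $\ell^n$), while the ambient group has order $\ell^{4n}$, so the displayed equality holds if and only if the two summands intersect only in $0$. This triviality of the intersection is where I expect the real work to lie, and it is exactly what Lemma~\ref{lem1} is designed to supply: suppose $\alpha \in \End(E)$ restricts to a map $E[\ell^n] \to \ker\psi$. Then $\psi\alpha$ kills $E[\ell^n]$; since $[\ell^n]$ is separable with kernel $E[\ell^n]$, the isogeny $\psi\alpha$ factors as $\psi\alpha = \ell^n\gamma$ for some $\gamma \in \Hom(E,E')$. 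But $\Hom(E,E') = \psi\End(E)$, so $\gamma = \psi\delta$ and $\psi\alpha = \psi(\ell^n\delta)$; left-cancelling the isogeny $\psi$ gives $\alpha = \ell^n\delta \in \ell^n\End(E)$, whence $\alpha|_{E[\ell^n]} = 0$. Thus the intersection is trivial and $\psi_*$ annihilates $\Sha(E/K)[\ell^n]$. The only places demanding care are the functoriality identification of $\psi_*$ with post-composition and the order bookkeeping; the genuinely arithmetic input is concentrated entirely in the identity $\Hom(E,E') = \psi\End(E)$, which is precisely the feature distinguishing downward isogenies.
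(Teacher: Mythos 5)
Your proposal is correct, but it follows a genuinely different route from the paper's own proof. The paper argues iteratively and softly: Theorem~\ref{thm:sha} gives $\Sha(E/k(E))[\ell^\infty]\simeq \Z/\ell^m\times\Z/\ell^m$ with $m=h_\ell(E)$; Theorem~\ref{sha-phi} (the equivalence \eqref{it1}$\Leftrightarrow$\eqref{it4}) shows that each single downward $\ell$-isogeny kills exactly the full $\ell$-torsion of $\Sha$; and since a chain of $m$ downward steps must annihilate a group of exponent $\ell^m$ while each step can kill at most $\ell$-torsion, composing $n$ such steps kills precisely the $\ell^n$-torsion. You bypass Theorem~\ref{sha-phi} entirely and treat the degree-$\ell^n$ isogeny $\psi\colon E\to E'$ in a single stroke: you realize $\psi_*$ as post-composition on the presentation of Corollary~\ref{cor:lSelmer}, use Theorem~\ref{thm:sha} only for the order count $\Sha(E/K)[\ell^n]\simeq(\Z/\ell^n)^2$ (whence $\Hom_{G_k}(E[\ell^n],E[\ell^n])$ is all of $M_2(\Z/\ell^n)$), and reduce everything to the decomposition $\End(E[\ell^n])=\operatorname{im}(\operatorname{res})\oplus\Hom(E[\ell^n],\ker\psi)$, whose trivial intersection is exactly the $\ell$-primary-degree case of Lemma~\ref{lem1}, namely $\Hom(E,E')=\psi\End(E)$. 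The paper's route buys brevity, since Theorem~\ref{sha-phi} has already been established; yours buys an explicit, one-shot description of the induced map on $\Sha$ and of exactly which classes it kills, in effect re-proving the downward case of Theorem~\ref{sha-phi} directly for composite isogenies. Two points would need filling in for a complete write-up: (i) the functoriality assertion that post-composition with $\psi$ on the middle terms of Corollary~\ref{cor:lSelmer} induces the map $\Sha(E/K)\to\Sha(E'/K)$ coming from $\HH^1(K,E)\to\HH^1(K,E')$ --- this follows from compatibility of the Kummer sequences of $E$ and $E'$ with the morphism of short exact sequences determined by $\psi$, but it is the one place where you lean on unstated compatibilities of the paper's constructions; and (ii) the cyclicity of $\ker\psi$, which is true (a non-cyclic kernel would contain $E[\ell]$, so $\psi$ would factor through $[\ell]$, whose path is not purely downward), though your order count actually needs only $\#\ker\psi=\ell^n$ and $\ker\psi\subseteq E[\ell^n]$, because $\Hom(E[\ell^n],A)\simeq A\times A$ for any subgroup $A\subseteq E[\ell^n]$.
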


\begin{proof}
	Suppose $E$ is at level $m > 0$ in the $\ell$-isogeny graph. Then $\Sha(E/k(E))[\ell^\infty] \simeq \Z/\ell^m\Z \times \Z/\ell^m\Z$. By Theorem~\ref{sha-phi} any composition of $m$ downward $\ell$-isogenies must kill $\Sha(E/k(E))[\ell^\infty]$, but each step can only kill $\ell$-torsion. Hence each step must kill all of the $\ell$-torsion.
\end{proof}

\section{Concluding remarks}\label{sec:p}

The questions addressed in this paper can be considered also for supersingular curves and/or in the case $\ell = p$
although (apart from the supersingular $\ell = p$ case) the situation is much simpler.

If $E$ is ordinary and $\ell = p$, there are only 
two degree $p$ isogenies from $E$, the Frobenius and Verschiebung. The
component of the isogeny graph containing $E$ is thus a cycle consisting 
of the conjugates of $E$. The Selmer group and $E_1(K)/\phi(E(K))$ are
one dimensional if $\phi$ is Frobenius or if $\phi$ is Verschiebung and
the $p$-torsion of $E$ is not defined over $K$. When $\phi$ is Verschiebung and
the $p$-torsion of $E$ is defined over $K$, both these groups are two dimensional.
The $p$-torsion of $\Sha(E/K)$ is trivial in both cases. See \cite[Proposition 3.3]{Ulmer}, but
note that the Selmer group in the case $A=1$ of the Verschiebung descent has dimension one more
than reported there.

Now consider the case when $E$, $F$ are isogenous supersingular curves over $k$. If $\F_{p^2} \subset k$, then $\Hom_{G_k}(F,E)$ has rank $4$ and so Corollary~\ref{cor:lSelmer} shows that $\Sha(E/k(F))$ is a $p$-group. If $\F_{p^2} \not\subset k$, then the connected components of the $\ell$-isogeny graph of $E$ are volcanoes with at most two levels (for $\ell = 2$, \cite{DG}). Moreover, if there are two levels, then $\ell = 2$. Indeed, if $E$ is supersingular and $E[\ell] \subset E(k)$, then $\ell | (q-1)$. If, in addition, $\F_{p^2} \not\subset k$ then $\# E(k) = q+1$ and we get that $\ell | (q+1)$, so $\ell = 2$.
The analogues of Theorems \ref{sha-phi} and \ref{thm:sha} hold when $\F_{p^2} \not\subset k$, with $\End(E)$ an order of the quadratic field $\Q(\sqrt{-p})$. 

Finally, we remark that, as noted in \cite{Milne}, if $E$, $F$ are non-isogenous elliptic curves over $k$,
then it follows from the Birch-Swinnerton-Dyer formula that 
$$\# \Sha(E/k(F)) = (\#E(k) - \#F(k))^2.$$

\section*{Acknowledgements}
The authors were supported by the Marsden Fund Council administered by the Royal Society of New Zealand. They would like to thank Doug Ulmer for correspondence regarding \cite{Ulmer}.


\section*{References}

\begin{biblist}

\bib{DG}{article}{
AUTHOR = {Delfs, Christina},
AUTHOR = {Galbraith, Steven D},
     TITLE = {Computing isogenies between supersingular elliptic curves over
              {$\Bbb{F}_p$}},
   JOURNAL = {Des. Codes Cryptogr.},
    VOLUME = {78},
      YEAR = {2016},
     PAGES = {425--440},
}
\bib{FM}{inproceedings}{
AUTHOR = {Fouquet, Mireille},
 AUTHOR = {Morain, Fran\c{c}ois},
     TITLE = {Isogeny volcanoes and the {SEA} algorithm},
 BOOKTITLE = {Algorithmic number theory ({S}ydney, 2002)},
    SERIES = {Lecture Notes in Comput. Sci.},
    VOLUME = {2369},
     PAGES = {276--291},
 PUBLISHER = {Springer, Berlin},
      YEAR = {2002},
}
\bib{Kohel}{book}{
    AUTHOR = {Kohel, David Russell},
     TITLE = {Endomorphism rings of elliptic curves over finite fields},
      NOTE = {Thesis (Ph.D.)--University of California, Berkeley},
 PUBLISHER = {ProQuest LLC, Ann Arbor, MI},
      YEAR = {1996},
     PAGES = {117},
     }
     
 \bib{Milne}{article}{
 AUTHOR = {Milne, J. S.},
     TITLE = {The {T}ate-\v{S}afarevi\v{c} group of a constant abelian variety},
   JOURNAL = {Invent. Math.},
  FJOURNAL = {Inventiones Mathematicae},
    VOLUME = {6},
      YEAR = {1968},
     PAGES = {91--105},
     } 

\bib{MMSTV}{article}{
AUTHOR = {Miret, J.},
AUTHOR = {Moreno, R.}, 
AUTHOR = {Sadornil, D.,}
AUTHOR = {Tena, J.},
AUTHOR = {Valls, M.},
     TITLE = {Computing the height of volcanoes of {$l$}-isogenies of
              elliptic curves over finite fields},
   JOURNAL = {Appl. Math. Comput.},
  FJOURNAL = {Applied Mathematics and Computation},
    VOLUME = {196},
      YEAR = {2008},
    NUMBER = {1},
     PAGES = {67--76},
}
     
 \bib{Silverman}{book}{
 AUTHOR = {Silverman, Joseph H.},
     TITLE = {The arithmetic of elliptic curves},
    SERIES = {Graduate Texts in Mathematics},
    VOLUME = {106},
 PUBLISHER = {Springer, Dordrecht},
      YEAR = {2009},
     PAGES = {xx+513},
     }    
 \bib{Shioda}{inproceedings}{
     AUTHOR = {Shioda, Tetsuji},
     TITLE = {Theory of {M}ordell-{W}eil lattices},
 BOOKTITLE = {Proceedings of the {I}nternational {C}ongress of
              {M}athematicians, {V}ol. {I}, {II} ({K}yoto, 1990)},
     PAGES = {473--489},
 PUBLISHER = {Math. Soc. Japan, Tokyo},
      YEAR = {1991},
   }   
   \bib{Drew}{inproceedings}{
   AUTHOR = {Sutherland, Andrew V.},
     TITLE = {Isogeny volcanoes},
 BOOKTITLE = {A{NTS} {X}---{P}roceedings of the {T}enth {A}lgorithmic
              {N}umber {T}heory {S}ymposium},
    SERIES = {Open Book Ser.},
    VOLUME = {1},
     PAGES = {507--530},
 PUBLISHER = {Math. Sci. Publ., Berkeley, CA},
      YEAR = {2013},
      }
      
      \bib{Tate-BSD}{article}{
	author={Tate, John},
	title={On the conjectures of Birch and Swinnerton-Dyer
and a geometric analog},
	note={S\'eminaire Bourbaki. Vol. 9, Exp. No. 306},
	pages={415--440},
        publisher={Soc. Math. de France, Paris},
	date={1964},
        }

\bib{Ulmer}{article}{
	   author={Ulmer, Douglas L.},
	   title={$p$-descent in characteristic $p$},
	   journal={Duke Math. J.},
	   volume={62},
	   date={1991},
	   number={2},
	   pages={237--265},
	   issn={0012-7094},
	}

\end{biblist}


\end{document}